\title{Parametrizations of Ideals in $K[x,y]$ and $K[x,y,z]$}
\author{ALEXANDRU CONSTANTINESCU} 
\address{Dipartimento di Matematica, Universit\`a di Genova,
Via Dodecaneso 35, 16146 Genova, Italy}
\email{a.constantinescu@unibas.ch}
\theoremstyle{plain}
\newtheorem{theorem}{Theorem}[section]
\newtheorem*{theorem*}{Theorem}
\newtheorem{lemma}[theorem]{Lemma}
\newtheorem{corollary}[theorem]{Corollary}
\newtheorem{proposition}[theorem]{Proposition}
\theoremstyle{definition}
\newtheorem{definition}[theorem]{Definition}
\newtheorem{remark}[theorem]{Remark}
\numberwithin{equation}{section}
\def\HF{Hilbert function}
\def\GB{Gr$\ddot{\textrm{o}}$bner basis}
\def\GBs{Gr$\ddot{\textrm{o}}$bner bases}
\def\Gin{\textup{Gin}}
\def\Lex{\textup{Lex}}
\def\rank{\textup{rank}}
\def\dim{\mathop{\rm dim}\nolimits}
\def\ini{\operatorname{\rm in}}
\def\deg{\operatorname{\rm deg}\nolimits}
\def\det{\operatorname{\rm det}\nolimits}
\def\Min{\mathop{\rm Min}\nolimits}
\def\Max{\mathop{\rm Max}\nolimits}
\def\Hilb{\mathop{\rm Hilb}\nolimits}
\def\PR{\ensuremath{\mathbb{P}}}
\def\AF{\ensuremath{\mathbb{A}}}
\def\GG{\ensuremath{\mathbb{G}}}
\def\a{\ensuremath{\alpha}}
\def\b{\ensuremath{\beta}}
\def\g{\ensuremath{\gamma}}
\def\p{\ensuremath{\phantom{-}}}
\def\pp{\ensuremath{\phantom{^2}}}
\def\k#1{\ensuremath{K[x_{1} , \ldots , x_{#1}]}}
\def\I#1{\ensuremath{V(#1)}}
\def\V#1{\ensuremath{V_{hom}(#1)}}
\def\AA#1{\ensuremath{\mathscr{A}_{#1}}}
\begin{document}
\maketitle
\begin{abstract}
We parametrize the affine space of Artinian affine ideals of $K[x,y]$ which have a given initial ideal with respect to the degree reverse lexicographic term order. The fact that the term order is degree compatible allows us to extend the parametrization to the projective case, namely  zero-dimensional subschemes of $\mathbb{P}^2$ with some extra assumption and to determine the Betti strata of these Gr\"obner cells. This allows us to prove a formula due to A. Iarrobino for the codimension of the Betti strata of codimension two punctual schemes in $\mathbb{P}^2$.
\end{abstract}
\section{Introduction}
Let  $K$ be a filed of any characteristic and $\k{n}$ the polynomial ring in $n$ variables. For a polynomial $f \in \k{n}$  and any term order $\tau$ we denote by $\ini_{\tau}(f)$ the initial term of $f$ with respect to the term order $\tau$. If $I\subset\k{n}$ is an ideal, we denote by $\ini_{\tau}(I)$ the initial ideal of $I$ with respect to $\tau$, that is the monomial ideal generated by $\ini_{\tau}(f)$, for all $f \in I \setminus\{0\}$.\\
 Given  a monomial ideal $I_0 \subset \k{n}$, the set 
$$V_{hom}(I_0) =   \{ I \subset \k{n} ~: ~I \textup{~homogeneous, with~}  \ini_{\tau}(I) = I_0 \}$$ has a natural structure of affine variety, in the sense that an ideal $I \in \V{I_0}$ can be considered as a point in the affine space $\mathbb{A}^N$. The coordinates are  given by the coefficients of the non-leading terms in the reduced \GB~ of the ideal $I$. If  $d=\dim_K(\k{n}/I_0) < \infty$, also the set  in which we consider all ideals (homogeneous or not), 
$$\I{I_0} := \{ I \subset \k{n} ~:~ \ini_{\tau}(I) = I_0 \}$$
 has a structure of affine variety.  

It is important to note that $\I{I_0}$ (respectively $V_{hom}(I_0)$) coincides with the points of the Hilbert scheme $\Hilb^d(\AF^n)$  (respectively $\Hilb^H(\PR^{n-1})$) that degenerate to $I_0$ under a suitable $K^*$-action associated to a weight vector representing the term order on monomials of degree $\le d+1$. Here $H$ is the Hilbert series of $\k{n}/I_0$. 
 By analogy with the Schubert cells for Grassmannians, $\I{I_0}$ and $V_{hom}(I_0)$ are called  Gr\"obner cells.
These varieties play an important role in the study of various types of Hilbert schemes and also in the problem of deforming nonradical to radical ideals, see \cite{brian, concaRV, ES1, ES2, gotsch1, gotsch2, iarro1, iarro, R}.
For ideals $I_0$ of $K[x,y]$ it is known by results of J. Brian\c con \cite{brian} and A. Iarrobino \cite{iarro1}  that  $V(I_0)$  is an affine space. This fact is also a consequence of general results of A. Bialynicki-Birula \cite{bial1, bial2}.

The main goal of this paper is to obtain a parametrization in the affine case in two variables, that may be extended to the projective case in three variables. The only term orders that allow us to do so are degree-compatible term orders. It is not hard to see that the for $K[x,y]$, if one fixes $x>y$, there is only one such term order. Even if it is the same term order as the  degree lexicographic one,  we will call it  the degree reversed-lexicographic (DRL) in order to emphasize that in the extension to the projective setting we will  use the DRL term order induced by $x>y>z$.

Our main result (Theorem \ref{main}) is the parametrization of  the affine variety $\I{I_0}$,  when $I_0$ is a lex-segment  ideal of $R=K[x,y]$,  $\tau$ is the degree reverse-lexicographic  term order induced by $x>y$ and  $\dim_{k}(R/I_{0}) < \infty$.
 The parametrization from Theorem \ref{main} will be then extended to $\V{I_0K[x,y,z]}$ (Theorem \ref{main2}), which will  be shown to be dense in $\Hilb^H(\PR^{n-1})$ when the characteristic is 0 or large enough. The restriction to the lex-segment case in Theorem \ref{main} requires the restriction of the characteristic in order to obtain that $\V{I_0K[x,y,z]}$ is a Zariski open subset of  $\Hilb^H(\PR^{n-1})$.

In \cite{concavalla} A. Conca and G. Valla parametrize in a similar way the variety $V(I_0)$, with respect to the lexicographic term order. As we already mentioned, the fact that the lexicographic term order is not degree-compatible does not allow a lifting of the parametrization to the projective case. Thus the  advantage obtained by Theorem \ref{main} is that it allows us to extend the parametrization to homogeneous ideals of the polynomial ring in three variables, with some extra assumption. 

 This explicit description of the affine space structure of $\I{I_0}$ is obtained by associating to each  ideal a canonical Hilbert-Burch matrix. We will see that the coordinates of the affine space $\mathbb{A}^N$ will correspond to coefficients of polynomials in $K[y]$. 
 The  parametrization allows us to find a formula for the dimension of this affine space in terms of the Hilbert function of $R/I_0$. As we will show in Section 6, this dimension coincides with the dimension of the Hilbert function strata in the Hilbert scheme of points in $\mathbb{P}^2$.  Other dimension formulas for this variety  were originally found in \cite{ES1,ES2,got,IK,NvdB}.  The  combinatorial nature of  our proof of the formula allows us to find as a corollary  the same  upper and lower bounds  for this dimension, that the authors find in \cite{NvdB}.

Unfortunately parts of the proof of the main theorem are rather technical. In order to clarify the most complicated steps we dedicate the fifth section to examples.

In Section 6 we will consider ideals of the polynomial ring in three variables, $S = K[x,y,z]$. For a monomial ideal  $J_0 \subset S$   it is known that  the affine variety $\V{J_0}$ is in general not an affine space (see \cite{iarro2, concavalla} for examples). We will assume that $J_0 = I_0S$, with $I_0$ a  lex-segment ideal of $K[x,y]$, and parametrize the variety $\V{J_0}$ with respect to the degree reversed-lexicographic term order induced by $x>y>z$. This will be an affine space of the same dimension as $V(I_0)$. 

In the last section we come to study the Betti strata of $\V{J_0}$, with $J_0 \subset S$ as above. We emphasize one more time that $\V{J_0}$ is a dense subset of  $\Hilb^H(\PR^{n-1})$ under some restrictions on the characteristic, where $H$ is the Hilbert function of $S/J_0$. 
In \cite[Remark 3.7]{iarro} A. Iarrobino gives  a generalization to codimension two punctual schemes in $\mathbb{P}^2$ of the codimension formula of the Betti strata, together with an indication of a proof.
Using the extension of the parametrization from Section 6, we obtain a different proof for the above-mentioned formula. 

The results of this paper were discovered and double-checked by extensive computer algebra experiments performed with CoCoA \cite{cocoa}.

The author wishes to thank his advisor, Prof. Aldo Conca, 
for suggesting this problem and for his very helpful remarks
on preliminary versions of this paper. Many thanks also to Prof. Giuseppe Valla for his useful remarks regarding the dimension formula of the Hilbert scheme. The author also thanks the anonymous referee for all the useful comments which helped improve the presentation of the paper.

\section{Preliminaries}

All the initial terms and ideals will be considered from now on with respect to the degree reverse lexicographic term order induced by $x>y$.

Let $I_0 \subset R = K[x,y]$ be a monomial ideal with $\dim_{k}(R/I_0) < \infty$. We choose for $I_0$ the following set of generators:
\[ I_0 := (x^t, x^{t-1}y^{m_1}, \ldots, xy^{m_{t-1}}, y^{m_t}), \]
where  $t := \min\{ j : x^j \in I_0\}$, $m_0 = 0$ and  $m_i := \min\{ j : x^{t-i}y^j \in I_0\}$ for every $1 \le i \le t$. Notice that we have $0 = m_0 \le m_1 \le \ldots \le m_t$; so let us  define $d_i:= m_i - m_{i-1}$ for all $ i = 1, \ldots t$.
It is clear that, even if the above generators are not always minimal,  the ideal $I_0$ is uniquely  determined by the sequence of the $m_i$'s, so also by that of the $d_i$'s. It is easy to check that the lex-segment ideals correspond to the vectors $d$ for which $d_i > 0$ for every $1 \le i \le t$.
We consider the following matrix:
\begin{displaymath}
X = \left(
  \begin{array}{cccc}
   y^{d_1} &0 &\ldots &0 \\ 
     -x &y^{d_2} &\ldots &0  \\ 
     0 &-x &\ldots &0 \\ 
     \ldots &\ldots &\ldots &\ldots \\ 
       0 &0 &\dots &y^{d_t} \\ 
           0 &0 &\ldots &-x\\  
  \end{array}
           \right).
\end{displaymath}
This matrix is a Hilbert-Burch matrix for $I_0$, in the sense that its signed minors are $x^{t-i}y^{m_i}$, so they generate the ideal, and its columns generate their syzygy module. 

It is useful to consider also the corresponding degree matrix $U(I_0)= (u_{i,j})$. The entries of $U(I_0)$ are the degrees of the (homogeneous) entries representing a map of degree zero:
$$F_1=\bigoplus_{i=1}^{t}R(-t+i-m_i-1)  \longrightarrow F_0 = \bigoplus_{i=0}^{t}R(-t+i-1-m_{i-1}).$$
 Notice that $X$ is such a matrix. We have 
\[ u_{i,j} = i-j+m_j-m_{i-1},\textup{~ for ~}i=1,\ldots,t+1\textup{~ and~} j=1,\ldots,t.\]
Let $A$ be another $(t+1) \times t$ matrix, with entries in the polynomial ring in one variable $K[y]$, with the following property:
\begin{equation}\label{A}
 \deg(a_{i,j}) \le \left.\bigg\{ \begin{array}{ccc}
                                 \textup{Min}\{u_{i,j} -1,& d_i - 1\}&\textup{if}~ i \le j, \\
                               \rule{0pt}{3ex}  \textup{Min}\{u_{i,j} \phantom{-1..},& d_j - 1\}    &\textup{if} ~ i > j,\\
                                   \end{array}\right.
\end{equation}
where $i = 1,\ldots,t+1$ and $j = 1,\ldots,t$. We will denote by \AA{I_0} the set of all matrices that satisfy the above condition.  Let $b_{i,j}$ denote the minimum on the right hand side of (\ref{A}) and define $N$ as the following positive integer:
$$N = \sum_{b_{i,j} \ge 0} (b_{i,j}+1).$$
Notice  that $\AA{I_0} = \mathbb{A}^N$.
  In Section 4 we will compute an exact formula for $N$, when $I_0$ is a lex-segment ideal, depending on the Hilbert function of $R/I_0$.  

For $i = 1,\ldots, t+1$ and any $A \in \AA{I_0}$, denote by   $[X+A]_i$ the matrix obtained by deleting the $i$th row of the matrix $X+A$. For $i= 0,\ldots,t$ we define the polynomials: 
 \[f_i:=(-1)^i \det([X+A]_{i+1}).\]
Let  $\psi : \AA{I_0} \longrightarrow \I{I_0}$ be the map of affine varieties  defined by
\[\psi(A) := I_t(X+A),\]
where by $I_t(X+A)$ is the ideal generated by $t$-minors of the matrix $X+A$. In particular 
$\psi(A)$ is the ideal generated by $f_0,\ldots,f_t$.

\section{Main theorem}

\begin{theorem}\label{main}
Let $I_0 \subset R = K[x,y]$ be a lex-segment ideal with $\dim_K(R/I_0) < \infty$. Then, the map of affine varieties $\psi: \AA{I_0} \longrightarrow \I{I_0}$ is bijective.
\end{theorem}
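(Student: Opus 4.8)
The plan is to prove bijectivity by establishing three things: that $\psi$ is well-defined (i.e. $\ini_\tau(\psi(A)) = I_0$ for every $A \in \AA{I_0}$), that $\psi$ is injective, and that $\psi$ is surjective. The degree bounds in (\ref{A}) are engineered so that adding $A$ to the Hilbert-Burch matrix $X$ does not change the initial terms of the maximal minors. Concretely, I would first show that for each $i$ the polynomial $f_i = (-1)^i\det([X+A]_{i+1})$ has initial term exactly $x^{t-i}y^{m_i}$. One expands the determinant of $[X+A]_{i+1}$ and checks, using the constraints $\deg(a_{k,j}) \le u_{k,j}-1$ (and the sharper bounds involving $d_i-1$), that every term other than the one coming from the product of the diagonal-type entries $y^{d_k}$ and the subdiagonal $-x$ entries is strictly smaller in the DRL order; the point is that the $a_{k,j}$ live in $K[y]$ so they carry no $x$, and their $y$-degree is too small to compensate. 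Hence $\ini_\tau(\psi(A)) \supseteq I_0$, and since $\dim_K(R/\psi(A)) \le \dim_K(R/\ini_\tau(\psi(A))) \le \dim_K(R/I_0) < \infty$ with the Hilbert-Burch resolution forcing the colength to be exactly $\dim_K(R/I_0)$, equality $\ini_\tau(\psi(A)) = I_0$ follows. This also shows $f_0,\dots,f_t$ are the reduced (or at least a) Gröbner basis, so $\psi$ indeed lands in $\I{I_0}$.

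For injectivity I would argue that the matrix $X+A$ can be recovered from the ideal $\psi(A)$ up to the ambiguity allowed by Hilbert-Burch, and that the normalization imposed by (\ref{A}) — bounding $\deg a_{i,j}$ strictly below the "diagonal" degrees $d_i-1$ or $d_j-1$ — kills exactly that ambiguity. More directly: suppose $\psi(A) = \psi(A')$. Then $X+A$ and $X+A'$ are both Hilbert-Burch matrices for the same ideal with the same graded Betti numbers, so they differ by an invertible change of basis on source and target that is graded of degree zero; such an automorphism is block upper/lower triangular with scalar diagonal, and one reads off from the degree constraints that it must be the identity modulo lower-degree corrections, forcing $A = A'$. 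Alternatively, and perhaps more cleanly, one can invert $\psi$ explicitly: given $I \in \I{I_0}$ with reduced Gröbner basis $g_0,\dots,g_t$ lifting $x^{t-i}y^{m_i}$, the tails of the $g_i$ (their non-leading terms, which by reducedness are standard monomials, hence of the shape $x^a y^b$ with small exponents) determine the entries of $A$, and the bounds (\ref{A}) are precisely the statement that these tails have the claimed shape. This gives a two-sided inverse at once and handles surjectivity as well.

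For surjectivity the heart of the matter is: starting from an arbitrary $I \in \I{I_0}$, produce $A \in \AA{I_0}$ with $\psi(A) = I$. Since $\ini_\tau(I) = I_0$ and $I_0$ has the displayed Hilbert-Burch matrix $X$, lifting $I_0$ to $I$ one gets a Hilbert-Burch matrix $X + A$ for $I$ for \emph{some} matrix $A$ (this uses that the graded Betti numbers are constant on $\I{I_0}$, which follows from semicontinuity plus the fact that $X$ already realizes the Hilbert function and no consecutive cancellation can occur here, or one simply lifts the resolution of $R/I_0$ along the flat degeneration). The issue is that a priori the entries of this $A$ need not satisfy (\ref{A}): they are polynomials in $x$ and $y$ of the right degree but possibly with $x$ in them or with $y$-degree up to $u_{i,j}$ rather than $\le u_{i,j}-1$ and $\le d_i - 1$. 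One then performs column and row operations on $X+A$ (which change neither the ideal of minors nor the module generated by the columns) together with reduction of the tails of the $f_i$ modulo $I_0$ to push $A$ into normal form; keeping track of degrees shows the normalized $A$ satisfies exactly the bounds in (\ref{A}).

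The main obstacle I expect is the surjectivity/normal-form step, specifically showing that the reduction procedure both (a) terminates in a matrix whose correction term genuinely lies in $K[y]$ with the sharp degree bounds — the subtle part being the $d_i - 1$ (resp.\ $d_j-1$) bounds, which are stronger than the naive $u_{i,j}-1$ and reflect the lex-segment hypothesis — and (b) is compatible with the requirement that $f_0,\dots,f_t$ be the \emph{reduced} Gröbner basis (so that the parametrization is by genuine coordinates on $\mathbb{A}^N$ and not an overcount). This is presumably where the ``rather technical'' parts the introduction warns about are concentrated, and I would expect to need a careful bookkeeping argument, organized by the index $j$ of the column and proceeding from the bottom rows upward, possibly with the examples of Section 5 serving as the guide to the general pattern. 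The fact that $\AA{I_0} = \mathbb{A}^N$ and $\I{I_0}$ is known a priori to be an affine space of the same dimension (by Briançon--Iarrobino) gives a useful sanity check and, via a dimension count, could even shortcut part of the argument: once $\psi$ is shown to be an injective morphism between affine spaces of equal dimension with the target irreducible, surjectivity would follow if one knew $\psi$ were, say, a closed immersion or dominant — though proving that cleanly may not be easier than the direct normal-form argument.
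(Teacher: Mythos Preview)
Your outline for well-definedness and surjectivity is close to the paper's in spirit (though your colength argument for well-definedness is circular: you need the Hilbert-Burch resolution to be \emph{graded} to read off the colength, and it is not; the paper instead checks the S-polynomial criterion directly, using that the columns of $X+A$ already furnish the reductions). The surjectivity argument via normalizing a Hilbert-Burch matrix by row/column operations is exactly what the paper does.

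The genuine gap is in injectivity. Both of your proposed routes fail. Route~(a) invokes a \emph{graded} change of basis between two Hilbert-Burch presentations, but the ideals in $\I{I_0}$ are not homogeneous, so there is no degree-zero automorphism to appeal to; the invertible matrices relating $X+A$ and $X+A'$ over $R$ are arbitrary units of $\mathrm{GL}$, and the degree bounds in~(\ref{A}) do not obviously pin them down. Route~(b) assumes the $f_i$ coincide with the reduced Gr\"obner basis of $I$, but they do not: in the paper's own Example~3, after normalization one has $f_3' = y^5 - 2xy^3 + \ldots$, and $-2xy^3$ is divisible by $\ini(f_2') = xy^3$. So the $f_i$ cannot be recovered from $I$ by taking the reduced Gr\"obner basis, and hence neither can $A$.

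You have also misplaced the role of the lex-segment hypothesis. The paper proves well-definedness and surjectivity for \emph{arbitrary} Artinian monomial $I_0$; the hypothesis $d_i>0$ is used only for injectivity, and precisely because the naive arguments you sketch do not work. The paper's actual proof writes $f_i = g_i + \sum R_{j,i}g_j$ via Lemma~\ref{lema1}, assembles the $R_{j,i}$ into an upper-triangular transition matrix $R$, and then argues block by block (the blocks being determined by the indices where $d_i\ge 3$) that $R$ is the identity. This involves manufacturing $K[y]$-syzygies from $R(X+A)$ and the columns of $X+B$, classifying the resulting equations into four types according to whether $r,s$ lie in $\mathcal{J}$ or not, and running a delicate degree-comparison induction with a custom total order on the $R_{r,s}$. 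None of this structure is visible in your proposal, and the lex-segment hypothesis is what makes the block shapes of $A$, $B$, and $R$ tractable enough for the induction to close.
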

This will be the parametrization of \I{I_0} that we are looking for. 
To prove this we have to prove three things:
\begin{enumerate}
\item 
The application $\psi$ is well defined.
\item
The application $\psi$ is injective.
\item
The application $\psi$ is surjective.
\end{enumerate}

We believe the result to be true without the assumption that $I_0$ is a lex-segment ideal. However,  we were not able to prove the second point without this hypothesis. Hoping that such a proof exists and, as   $\psi$  is well defined and surjective in general (so it is a parametrization), we present here proofs of  the first and the third point that work  for any monomial ideal $I_0$ with  \mbox{$\dim_K(R/I_0) < \infty$.}
\subsection{Proof of 1}
We want to prove that
$\ini(I_t(X+A)) = I_0, \textrm{ for all~} A \in \AA{I_0}$. 
 By construction we have  $\ini(f_i) = x^{t-i}y^{m_i}$. We will show that $\{f_0, \ldots, f_t\}$ form a Gr$\ddot{\textrm{o}}$bner basis.

As the syzygy module of $I_0$ is generated by the columns of the matrix $X$, by an optimization of the Buchberger algorithm (see \cite{krobb}, Remark 2.5.6), 
we only  have to look at the  S-polynomials of the form 
\[y^{d_i } f_{i-1} - xf_{i}, \quad \textrm{for all } i = 1,\ldots,t, \]
and check that they can be written as $\sum_{j=0}^{t}P_j f_j$, with
\[ \ini(P_j f_j) \le \ini(y^{d_i } f_{i-1} - xf_{i}), \textrm{for all}~ j = 0,\ldots,t.\]
Again by construction we have that
\[y^{d_i } f_{i-1} - xf_{i} + \sum_{j=0}^{t} a_{j+1,i} f_j = 0.\]
As all the $a_{i,j}$ are polynomials in $K[y]$ and all the leading terms of the $f_j$'s are divisible by different powers of $x$, we get that the leading terms of the $a_{i,j}f_j$'s cannot cancel each other, so we must have
\[ \textup{Max}_{j}\{ \ini(a_{i,j}f_j) | a_{i,j} \neq 0\} = \ini(y^{d_i } f_{i-1} - xf_{i}).\]
Thus,  the application $\psi$ is well defined.
\subsection{Proof of 2}
Suppose  there exist  two matrices $A, B \in \AA{I_0}$ such that $I_t(X+A) = I_t(X+B)$. We want to prove that in this case $A=B$.
For $i = 0,\ldots, t$ denote
\begin{eqnarray}\label{fi}
f_i &:= & (-1)^i\det([X+A]_{i+1}),\\
 g_i&:= & (-1)^i\det([X+B]_{i+1}).
 \end{eqnarray}
 We will prove that  $f_i = g_i,~\forall~ i = 0,\ldots,t$. This implies that $A=B$, because  the columns of the matrix $(X+A)$ and $(X+B)$ are syzygies for the $f_i$'s, respectively the $g_i$'s. So, if $f_i = g_i$ for all $i$, then  also the columns of $(X+A)-(X+B) = A-B$ will be again syzygies. But the entries of $A-B$ are polynomials in $K[y]$, and as the leading terms of the $f_i$'s involve different powers of $x$, they must all be zero. 

 We will first prove  two lemmas.
Throughout  this section we will use the following notation. The entries of the matrices  $X+A$ and $X+B$  are denoted  by $\a_{i,j}$  respectively   by $ \b_{i,j} $. The entries of $A$ and  $B$ are denoted by $a_{i,j}$, respectively  $b_{i,j}$. The $\a_{i,j}$'s are of the following form:
\begin{displaymath}
\a_{i,j} = \left.\Bigg\{ \begin{array}{lll}
                    y^{d_i} + a_{i,i} & \textup{if}~ i = j,\\ 
                    -x + a_{i+1,i} & \textup{if}~ i = j+1,\\
                    a_{i,j} & \textup{otherwise}.\\
                    \end{array}\right.                    
\end{displaymath}
The $\b_{i,j}$'s have an analogous form.
First we will show that the homogeneous component of maximum degree of $f_i$ is equal to the homogeneous component of maximum degree of $g_i$, for all $i = 0,\ldots,t$. 

Let $\omega$ be the weight vector $(1,1)$. For a polynomial $f \in K[x,y]$ we denote by $\ini_{\omega}(f)$ the sum of the monomials of maximum degree. For an ideal $I \subset K[x,y]$ we denote by $\ini_{\omega}(I) := \langle \ini_{\omega}(f) ~:~ f \in I \rangle$. We will prove the following lemma:
\begin{lemma}\label{inj1}
Let $I_0 \subset R$ be a monomial ideal with $\dim_K(R/I_0) < \infty$ and  let $A, B \in \AA{I_0}$ be two matrices such that $I_t(X+A) = I_t(X+B) = I$. Then, with the above notations, we have:
\[ \ini_{\omega}(f_i) = \ini_{\omega}(g_i), \quad \forall~i = 0,\ldots,t.\]
\end{lemma}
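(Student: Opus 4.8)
The plan is to reduce the statement to a computation about the top-degree parts of the determinants $f_i$ and $g_i$, using that $\ini_\omega$ is a degree-compatible valuation so that $\ini_\omega(I_t(X+A)) = I_t(X+A)$ degenerates correctly. First I would observe that, since the DRL term order refines the weight $\omega=(1,1)$, we have $\ini(f) = \ini(\ini_\omega(f))$ for any $f$, and hence $\ini_\omega(I) $ is again a homogeneous ideal with $\ini(\ini_\omega(I)) = \ini(I) = I_0$; in particular $\ini_\omega(I)$ lies in $\I{I_0}$ as well (in fact in $\V{I_0}$). Because $I_0$ is Artinian, $\ini_\omega(I)$ is generated by $\ini_\omega(h)$ for $h$ running over a Gröbner basis of $I$, and from Proof of 1 we know $\{f_0,\dots,f_t\}$ (respectively $\{g_0,\dots,g_t\}$) is such a Gröbner basis with $\ini(f_i)=\ini(g_i)=x^{t-i}y^{m_i}$. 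Therefore both $\{\ini_\omega(f_i)\}_i$ and $\{\ini_\omega(g_i)\}_i$ are minimal-in-leading-term generating sets of the one and the same homogeneous ideal $\ini_\omega(I)$, and each $\ini_\omega(f_i)$, $\ini_\omega(g_i)$ is homogeneous of the same degree $t-i+m_i$ with the same leading term.

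Next I would examine the structure of $\ini_\omega(f_i)$ directly from the matrix. The entry $\alpha_{i,j}$ of $X+A$ has $y$-degree bounded by \eqref{A}; comparing with $u_{i,j}$, the degree bound shows that in each entry the part of $X$ is the piece of top $\omega$-degree except possibly along the two diagonals where $a_{i,i}$ or $a_{i+1,i}$ could a priori tie the degree of $y^{d_i}$ or $x$ — but condition \eqref{A} forces $\deg a_{i,i}\le d_i-1 < d_i$ and $\deg a_{i+1,i}\le d_i -1$, which has $\omega$-degree strictly less than that of $x$. So for the purpose of extracting the maximal-degree homogeneous component of a $t$-minor, every off-diagonal entry $a_{i,j}$ contributes only through its top coefficient, and the two structural diagonals behave exactly as in $X$. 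I would then argue that $\ini_\omega(\det([X+A]_{i+1}))$ depends on $A$ only through a specific collection of leading coefficients of the $a_{i,j}$, and that these same coefficients are what appear in the syzygy relation $y^{d_i}f_{i-1} - x f_i + \sum_j a_{j+1,i} f_j = 0$ from Proof of 1.

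Finally, to conclude $\ini_\omega(f_i) = \ini_\omega(g_i)$, I would use that $\ini_\omega(f_i)$ and $\ini_\omega(g_i)$ are two homogeneous polynomials of the same degree, with the same leading monomial $x^{t-i}y^{m_i}$, lying in the same homogeneous Artinian ideal $\ini_\omega(I)$ whose initial ideal is $I_0$; hence their difference lies in $\ini_\omega(I)$ and has leading term strictly smaller than $x^{t-i}y^{m_i}$ in DRL, so if it were nonzero its leading term would be some $x^{t-i'}y^{m_{i'}}$-multiple corresponding to a later generator — a homogeneity/degree count rules this out because the degree $t-i+m_i$ is too small to be divisible (in the monomial-ideal sense) by the required larger generators. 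Running this for $i = t, t-1, \dots, 0$ in descending order of leading term, and subtracting off the already-identified top parts, forces $\ini_\omega(f_i) = \ini_\omega(g_i)$ for every $i$. The main obstacle I expect is the bookkeeping in the middle step: showing precisely that the maximal-$\omega$-degree component of the minor is unaffected by the sub-leading terms of the $a_{i,j}$ and isolating exactly which leading coefficients survive, so that the Gröbner-basis comparison can be applied cleanly; the degree bookkeeping in \eqref{A} is what makes this work, but it has to be tracked carefully along both diagonals and in the generic off-diagonal entries simultaneously.
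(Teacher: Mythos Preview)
Your opening paragraph is essentially the paper's first two steps: passing to $\ini_\omega(I)$ and observing that $\{\ini_\omega(f_i)\}$ and $\{\ini_\omega(g_i)\}$ are both DRL Gr\"obner bases of it with the same leading monomials. So far so good.

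The middle paragraph contains an error. You claim that condition~\eqref{A} forces $\deg a_{i+1,i}\le d_i-1$ to be strictly less than $\deg(-x)=1$, so the subdiagonal ``behaves exactly as in $X$''. But the actual bound is $\min\{u_{i+1,i},d_i-1\}=\min\{1,d_i-1\}$, which equals $1$ whenever $d_i\ge 2$; in that case the top-degree part of $\alpha_{i+1,i}$ is $-x+c\,y$, not $-x$. More generally, for $i>j$ the entry $a_{i,j}$ can reach degree $u_{i,j}$ whenever $u_{i,j}<d_j$, so the correct description is that $\ini_\omega(f_i)$ is the $i$th signed minor of $X+A'$ where $a'_{i,j}=c_{i,j}y^{u_{i,j}}$ for $j<i$ with $0\le u_{i,j}<d_j$ and $a'_{i,j}=0$ otherwise. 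This is exactly what the paper writes down, and it matters because it places $A'$ in a very specific family.

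The real gap is in your final paragraph. Having two Gr\"obner bases of the same homogeneous ideal with the same leading terms does \emph{not} force them to coincide (take $\{x^2,xy\}$ and $\{x^2+xy,\,xy\}$ in $(x^2,xy)$). Your degree count tries to rule out any nonzero difference, but the inequality you need is $t-i+m_i < t-i'+m_{i'}$ for every $i'>i$, i.e.\ $\sum_{k=i+1}^{i'}d_k>i'-i$. Without the lex-segment hypothesis (which the lemma does \emph{not} assume) some $d_k$ may vanish and the inequality fails outright. Even in the lex-segment case you only get $\ge$, with equality precisely when all intervening $d_k=1$; then $\ini_\omega(f_i)-\ini_\omega(g_i)$ can a~priori be a nonzero $K$-linear combination of the $\ini_\omega(g_{i'})$ of the same degree, and ``subtracting off'' the inductively known $\ini_\omega(f_{i'})=\ini_\omega(g_{i'})$ does nothing to force those coefficients to vanish.

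What closes the argument in the paper is precisely the structure you computed partially: since $A'$ and $B'$ are strictly lower-triangular with entries $c_{i,j}y^{u_{i,j}}$ of degree $<d_j$, they lie in the Conca--Valla parametrizing family for the \emph{lexicographic} order (for homogeneous ideals in $K[x,y]$, DRL and Lex agree). Both parametrize the same homogeneous ideal $\ini_\omega(I)$, so \cite[Theorem~3.3]{concavalla} gives $A'=B'$, hence $\ini_\omega(f_i)=\ini_\omega(g_i)$. Your middle paragraph was heading toward this identification; the missing step is to invoke (or reprove) the injectivity of that Lex parametrization rather than the ad~hoc degree count.
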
 
\begin{proof}
As the DRL order is a refinement of the partial order given by the weight vector $\omega$, we have by \cite{sturmfels} that:
\[ \ini(\ini_{\omega}(I)) = \ini(I) = I_0.\]
By the proof of 1. we know  that both $\{ f_i \}_{i=0,\ldots, t}$ and $\{ g_i \}_{i=0,\ldots, t}$ are \GBs~ with respect to the DRL term order. Again by \cite{sturmfels}
we get that $\{ \ini_{\omega}(f_i) \}_{i=0,\ldots, t}$ and $\{ \ini_{\omega}(g_i) \}_{i=0,\ldots, t}$ are  DRL \GBs~of $\ini_{\omega}(I)$.

As the bounds on the degrees of the entries of $A$ and $B$ are connected to $U(I_0)$, so to  the degrees of homogenous matrices, we have that the homogeneous polynomials $\ini_{\omega}(f_0),\ldots,\ini_{\omega}(f_t)$ will be the maximal minors of a matrix $X+A'$, where the entries $a'_{i,j}$  of $A'$ have the following property:
\[ a'_{i,j} = \left.\bigg\{ \begin{array}{ll}
                                c_{i,j}y^{u_{i,j}} &\textup{if}~ j<i ~\textup{and}~ 0 \le u_{i,j} < d_j, \\
                               \rule{0 pt}{3ex} 0     &\textup{otherwise}, \\
                                   \end{array}\right. \]
with $c_{i,j} \in K$.
The same holds for the polynomials $\ini_{\omega}(g_0),\ldots,\ini_{\omega}(g_t)$. Let us say they are the maximal minors of a matrix $X+B'$, with $B'$ having the same property as $A'$.

As all these polynomials are homogeneous elements of $K[x,y]$, their leading term is the same for the DRL and the Lex term order. So we find ourselves in the case already solved  in \cite{concavalla}. That is the matrices $A'$ and $B'$ parametrize the same homogeneous ideal, $\ini_{\omega}(I)$. So, by \cite[Theorem 3.3]{concavalla},  they must be equal. Thus we also have that $\ini_{\omega}(f_i) = \ini_{\omega}(g_i)$ for all $i = 0,\ldots, t$. 
\end{proof}

If we denote by $I=I_t(X+A)$, we have the following lemma:
\begin{lemma}\label{lema1}
Let $f \in I$ be a polynomial such that $x^t$ does not divide any monomial $m \in \textup{Supp}(f)$ and   let $f_i$ be the polynomials defined in (\ref{fi}) for any $i \in \{0,\ldots,t\}$. Then $f$ can be written as:
\[ f = \sum_{i = 1}^{t} P_{i} f_i,\]
with $P_i \in K[y]$ and $\deg(f_i) \le \deg(f)$.
\end{lemma}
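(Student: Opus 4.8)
The plan is to prove this by descending induction on the $x$-degree of the leading term of $f$, reducing $f$ modulo the Gröbner basis $\{f_0, \ldots, f_t\}$ but being careful to never use $f_0$, whose leading term $x^t$ is the only generator divisible by $x^t$. First I would observe that, since $\{f_0, \ldots, f_t\}$ is a Gröbner basis for $I$ with respect to the DRL order (by the proof of Part 1), any $f \in I$ can be written as $f = \sum_{i=0}^t Q_i f_i$ with $\ini(Q_i f_i) \le \ini(f)$ for all $i$, where the $Q_i$ lie in $R = K[x,y]$. The key structural fact is that $\ini(f_i) = x^{t-i} y^{m_i}$, so the leading terms involve strictly decreasing powers of $x$ as $i$ increases; in particular $\ini(f_0) = x^t$.

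The heart of the argument is to show two things simultaneously by a division/reduction procedure: (a) that $f_0$ does not appear, i.e. $Q_0 = 0$, and (b) that the remaining coefficients can be taken in $K[y]$ rather than $K[x,y]$. For (a): if $Q_0 \ne 0$, then $\ini(Q_0 f_0)$ would be divisible by $x^t$, hence $\ini(Q_0 f_0) \le \ini(f)$ together with the hypothesis that no monomial of $\supp(f)$ is divisible by $x^t$ forces a cancellation among the leading terms of the $Q_i f_i$; but since the $\ini(f_i)$ involve pairwise distinct powers of $x$, the term $\ini(Q_0 f_0)$ (divisible by $x^t$) can only cancel against contributions from other $Q_i f_i$ in which $x^t \mid$ some monomial, and I would chase through the divisibility to conclude that any such $x^t$-divisible part of $f$ can be absorbed, ultimately reducing to the case $Q_0 = 0$. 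For (b): once $Q_0 = 0$, the leading term of $f$ is $\ini(Q_j f_j)$ for the largest $j$ with $Q_j \ne 0$ and $\ini(Q_j f_j) = \ini(f)$; writing $\ini(f) = x^a y^b$ with $a \le t-1$, divisibility by $x^{t-j} y^{m_j}$ shows we may extract the pure monomial $c\, x^{a-(t-j)} y^{b - m_j}$, subtract $c\, x^{a-(t-j)} y^{b-m_j} f_j$ from $f$, and induct — but to keep the coefficient in $K[y]$ I would instead induct on the $x$-degree: the top $x$-power part of $f$ is a $K[y]$-combination of the top $x$-power parts of the $f_i$ (since these tops are $x^{t-i}$ times a polynomial in $y$, namely the relevant minor of the $y$-part of $X+A$), and subtracting the corresponding $K[y]$-combination strictly lowers the $x$-degree while staying in $I$ and preserving the $x^t$-freeness, so the induction closes.

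The degree bound $\deg(f_i) \le \deg(f)$ for the terms actually used follows from the Gröbner-basis division inequality $\ini(Q_i f_i) \le \ini(f)$ together with the fact that $Q_i \in K[y]$ has nonnegative degree, so $\deg(f_i) \le \deg(Q_i f_i) = \deg(\ini(Q_i f_i)) \le \deg(\ini(f)) = \deg(f)$, using that the DRL order is degree-compatible; one only needs this for those $i$ with $P_i \ne 0$, which is exactly what the statement asks.

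The main obstacle I anticipate is step (a): rigorously ruling out $f_0$ and, more generally, controlling which reductions can reintroduce an $x^t$-divisible monomial. The cleanest way around this is probably to run the division algorithm by the monomial ideal $I_0$ directly — at each step the leading monomial of the current remainder is some $x^{t-i} y^{m}$ with $i \ge 1$ (never $i = 0$, precisely because the remainder stays $x^t$-free by induction, there being no way to \emph{create} an $x^t$-divisible term when all the $f_i$ we subtract have $x$-degree at most $t-1$ in their leading term and we only ever multiply by monomials lowering or fixing the $x$-degree appropriately), so $f_0$ is genuinely never needed; once that is pinned down, the passage from $K[x,y]$-coefficients to $K[y]$-coefficients is the routine homogeneous-in-$x$ bookkeeping sketched above.
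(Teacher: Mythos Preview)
Your final paragraph is exactly the paper's argument: since $\ini(f)=x^s y^r$ with $s<t$, one subtracts $\textup{LC}(f)\,y^{\,r-m_{t-s}}\,f_{t-s}$ and iterates, the remainder staying $x^t$-free because (this is the one place your phrasing is slightly loose) for $i\ge 1$ \emph{every} monomial of $f_i$---not just the leading one---has $x$-degree $<t$, and the multiplier is a pure $y$-monomial. The detour through a general Gr\"obner representation $f=\sum Q_i f_i$ in your first two paragraphs is unnecessary and the paper does not take it.
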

\begin{proof}
We have that $\ini(f) = x^sy^r$ with $s < t$. As $\ini(f) \in \ini(I)$, we have that $r \ge m_{t-s}$. We now define a new polynomial: 
\[f' := f - \textup{LC}(f)y^{r-m_{t-s}} f_s,\]
where LC$(f)$ is the leading coefficient of $f$. By construction, the monomials that appear in the support of the $f_i$'s are not divisible by $x^t$ for $i = 1,\ldots,t$. So we have that $f'$ has the same property as $f$. After a finite number of steps we will obtain the desired representation. 
\end{proof}
It is important to keep in mind from Lemma \ref{lema1} that $f_0$ is not needed to rewrite $f$ and that the polynomials $P_i$ are only in the variable $y$. 
The above lemmas do not need the assumption that $I_0$ is a lex-segment ideal.\\

 \emph{Proof of the Injectivity.} 
From this point on, we will use the fact that $I_0$ is a lex-segment ideal, i.e. for any monomial $u \in I_0$ of degree $d$, all the monomials $v$ of degree $d$, with $v >_{\Lex} u$ are also in $I_0$.
The idea of the following proof is to rewrite $f_i$'s in terms of the $g_i$'s, to put the appearing coefficients in a matrix $R$ and then prove that this matrix is actually the identity matrix. The lex-segment hypothesis will allow us to do this  \lq\lq block-wise\rq\rq ~with respect to $R$. For any monomial ideal $I_0$ the shapes of the matrices $A$, $B$ and $R$ are difficult to control thus a \lq\lq block-wise\rq\rq or inductive proof is not known to us in the general setting. However, we will see in the next sections that this hypothesis is not so restrictive, meaning that the generic case is the lex-segment case when the characteristic of $K$ is zero or \lq\lq large enough\rq\rq.

Recall that $I_0$ is a lex-segment ideal if\mbox{}f $d_i > 0, ~\forall~ i = 1,\ldots,t$. This means that $\deg(f_{i-1})\le \deg(f_{i})$ for all $1 \le i \le t$. The indices $i$ for which the inequality is strict and  those for which the \lq\lq jump\rq\rq ~in the degree of the generators is higher than 1 play an important role in the proof.  
Let us denote the two sets of  these \lq\lq special\rq\rq~ indices by
\begin{eqnarray*}
 \mathcal{J}& := & \{ j \in 1, \ldots , t ~:~ d_j \ge 2 \}, \\
  \mathcal{I} &:= & \{ i \in 1,\ldots, t ~:~ d_i \ge 3\}. 
 \end{eqnarray*}
 Let $ i_1 \ldots, i_q$ be the elements of $\mathcal{I}$, respectively   $j_1,\ldots,j_p$ the elements of  $\mathcal{J}$, in increasing order. To simplify statements we will consider also $i_0=j_0=1$ and $i_{q+1}=j_{p+1}=t+1$. 
The fact that $I_0$ is a lex-segment allows us to give a more accurate description of maximal degrees that may appear in $X+A$. 
Above the diagonal ($i\le j$) we will have the following bounds:
\[ \textrm{Min}\{i-j+m_j - m_{i-1} -1, d_i -1\}=\textrm{Min}\{i-j+\sum_{k=i}^{j}d_k -1 , d_i -1\} = d_i - 1.\]
Below the diagonal ($i>j$) things are slightly more complicated:
\[ i-j +m_j - m_{i-1} =  i - j -  d_{j+1} - \ldots -   d_{i - 1}\le 1.\]
We can divide the matrix $X + A$ into blocks depending on the indices in $ \mathcal{I}$ as follows:
 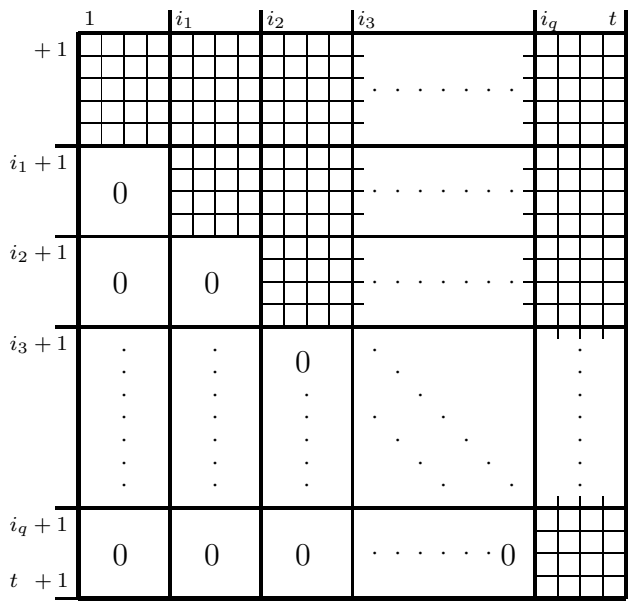
\begin{figure}[!h]
\setlength{\unitlength}{1.5mm}
\begin{picture}(52,54)
\linethickness{0.5mm}      
\multiput(4,0)(0,50){2}{\line(1,0){48}}
\multiput(4,0)(48,0){2}{\line(0,1){50}}

\linethickness{0.3mm}

\multiput(2,0)(0,8){2}{\line(1,0){50}}     
\multiput(2,24)(0,8){3}{\line(1,0){50}}

\multiput(12,0)(8,0){3}{\line(0,1){52}}       
\multiput(44,0)(8,0){2}{\line(0,1){52}}

\linethickness{0.1mm}

\multiput(4,42)(0,2){4}{\line(1,0){25}}  
\multiput(43,42)(0,2){4}{\line(1,0){9}}

\multiput(12,34)(0,2){3}{\line(1,0){17}}
\multiput(43,34)(0,2){3}{\line(1,0){9}}

\multiput(20,26)(0,2){3}{\line(1,0){9}}
\multiput(43,26)(0,2){3}{\line(1,0){9}}

\multiput(44,2)(0,2){3}{\line(1,0){8}}

\multiput(6,40)(2,0){3}{\line(0,1){10}}  

\multiput(14,32)(2,0){3}{\line(0,1){18}}

\multiput(22,24)(2,0){3}{\line(0,1){26}}

\multiput(46,0)(2,0){3}{\line(0,1){9}}
\multiput(46,23)(2,0){3}{\line(0,1){27}}

{\Large                
\put(7,3){$0$}
\put(15,3){$0$}
\put(23,3){$0$}
\put(7,27){$0$}
\put(15,27){$0$}
\put(7,35){$0$}
\put(23,20){$0$}
\put(41,3){$0$}
}
{\footnotesize
\put(-2,1){$t\phantom{_1}+1$} 
\put(-2,6){$i_q+1$}
\put(-2,22){$i_3+1$}
\put(-2,30){$i_2+1$}
\put(-2,38){$i_1+1$}
\put(-2,48){$\phantom{i_q}+1$}

\put(4.5, 50.66){$1$}         
\put(12.5, 50.66){$i_1$}
\put(20.5, 50.66){$i_2$}
\put(28.5, 50.66){$i_3$}
\put(44.5, 50.66){$i_q$}
\put(50.5, 50.66){$t$}
}
\multiput(8,10)(0,2){7}{\circle*{0.3}}  
\multiput(16,10)(0,2){7}{\circle*{0.3}}
\multiput(24,10)(0,2){5}{\circle*{0.3}}
\multiput(48,10)(0,2){7}{\circle*{0.3}}

\multiput(30,4)(2,0){6}{\circle*{0.3}}  
\multiput(30,28)(2,0){7}{\circle*{0.3}}
\multiput(30,36)(2,0){7}{\circle*{0.3}}
\multiput(30,45)(2,0){7}{\circle*{0.3}}

\multiput(30,22)(2,-2){7}{\circle*{0.3}} 
\multiput(30,16)(2,-2){4}{\circle*{0.3}}

\end{picture}
\caption{The matrix $X+A$}\label{allA}
\end{figure}\\
Now we will take a closer look at the nonzero part of this matrix. Let $\a \in 0, \ldots, q$. Denote by $\{j_{\b},\ldots , j_{\b+k}\} := \{ j \in  \mathcal{J} ~:~ i_{\a} < j < i_{\a +1}\}.$ It is possible that this set is empty, which only simplifies the picture. If the set is not empty,   the index $\b$ depends on $\a$.
The matrix formed by the first $i_{\a +1}$ rows of the columns indexed from $i_{\a}$ to $i_{\a +1} -1$ has the following form:
 \begin{figure}[!h]

\begin{center}

\setlength{\unitlength}{2.5mm}
\begin{picture}(26,30)
\linethickness{0.5mm}      
\multiput(2,0)(0,29){2}{\line(1,0){20}}
\multiput(2,0)(20,0){2}{\line(0,1){29}}

\linethickness{0.3mm}

\put(2,20){\line(1,0){4}}     
\put(14,4){\line(1,0){8}}
\put(6,8){\line(1,0){4}}
\put(2,12){\line(1,0){8}}
\put(2,16){\line(1,0){8}}

\multiput(14,0)(4,0){2}{\line(0,1){4}}        
\put(6,8){\line(0,1){12}}
\put(10,8){\line(0,1){8}}

\linethickness{0.1mm}

\multiput(2,28)(0,-1){8}{\line(1,0){20}}  
\put(1,20){\line(1,0){1}}

\multiput(6,20)(0,-1){4}{\line(1,0){16}}

\multiput(10,16)(0,-1){4}{\line(1,0){12}}

\put(14.5,11){\line(1,0){7.5}}
\put(14.5,10){\line(1,0){7.5}}
\put(14.5,9){\line(1,0){7.5}}
\put(14.5,8){\line(1,0){7.5}}
\put(18,7){\line(1,0){4}}
\put(18,6){\line(1,0){4}}
\put(18,5){\line(1,0){4}}

\linethickness{0.05mm}
\multiput(6,0)(4,0){2}{\line(0,1){30}}
\multiput(18,0)(4,0){2}{\line(0,1){30}}
\multiput(1,4)(0,8){2}{\line(1,0){21}}
\multiput(1,12)(0,4){2}{\line(1,0){1}}
\put(1,0){\line(1,0){1}}
\put(2,29){\line(0,1){1}}

{\Large                
\put(3.5,1.5){$0$}
\put(7.5,1.5){$0$}
\put(3.5,9.5){$0$}
\put(12.5,1.5){$0$}
}
{\Large                
\put(3.5,13.5){C}
\put(7.5,9.5){C}
\put(15.5,1.5){C}
}
{\Large                
\put(3.5,17){*}
\put(7.5,13){*}
\put(19.5,1){*}
}

\linethickness{2.5mm}
\multiput(2,20.5)(4,-4){3}{\line(1,0){1}}
\put(18,4.5){\line(1,0){1}}

{\footnotesize
\put(-1,0.5){$i_{\a+1} $} 
\put(-1,4.5){$j_{\b+k}$}
\put(-1,12.5){$j_{\b+1}$}
\put(0.25,16.5){$j_{\b}$}
\put(0.25,20.5){$i_{\a}$}

\put(2.25, 29.5){$i_{\a}$}         
\put(6.25, 29.5){$j_{\b}$}
\put(10.25, 29.5){$j_{\b+1} ~~~~ \ldots$}
\put(18.25, 29.5){$j_{\b+k}$}
\put(22.25, 29.5){$i_{\a+1} $}

}
\multiput(4,5)(0,1){3}{\circle*{0.15}}  
\multiput(8,5)(0,1){3}{\circle*{0.15}}

\multiput(9.5,2)(1,0){3}{\circle*{0.15}}  

\multiput(11,7)(1,-1){3}{\circle*{0.15}} 
\multiput(17,5)(-1,1){7}{\circle*{0.15}}

\end{picture}

\end{center}
\caption{ The first $i_{\a +1}$ rows of the columns indexed from $i_{\a}$ to $i_{\a +1} -1$ in $X+A$}\label{partA}
\end{figure}\\
In order to be able to present the matrices in a more compact and suggestive way,  we denote by $ {\scriptstyle\bullet} y^s$, for $s > 0$,  a polynomial in $K[y]$, with $\deg({\scriptstyle\bullet}y^s) \le s$.  That is 
$$ {\scriptstyle\bullet} y^s = \sum_{i=0}^{s} a_iy^i, \quad\textup{ where } a_i \in K~\forall ~0\le i\le s.$$
In   Figure \ref{partA} the part above the diagonal of the $k$th row consists only of $ {\scriptstyle\bullet} y^{d_k-1}$. In particular, if $k \notin \mathcal{J}$, then it is made of constants. The blocks denoted by C consist also only of constants. The black squares, which correspond to elements on the diagonal in the position $(j,j)$ with $j \in \mathcal{J}$ are $y^{d_j} +  {\scriptstyle\bullet} y^{d_j -1}$.
 Finally, all matrices denoted by {\Huge \raisebox{-0.95 ex}{*}} are of the form:
\[
{\textrm{\Huge \raisebox{-0.95 ex}{*}}}= \left( 
  \begin{array}{ccccc}
    -x +  {\scriptstyle\bullet} y &\phantom{-}y+c & c & \ldots & c \\ 
     {\scriptstyle\bullet} y & -x + c & \phantom{-}y+c & \ldots & c \\ 
     {\scriptstyle\bullet} y & c & -x + c& \ldots & c \\ 
    \vdots& \vdots & \vdots & \ddots & \phantom{-}y+c \\ 
     {\scriptstyle\bullet} y & c & c & \ldots & -x + c \\ 
  \end{array}
  \right).
\]
 For simplicity, we denote by $c$ a constant in general, so all $c$'s that appear can be different one from the other, and can also be zero.

By Lemma \ref{inj1} we have that $\deg(f_i-g_i)< \deg(f_i)$. By applying  Lemma \ref{lema1} to $f_i-g_i$ with respect to the $g_i$'s, we can write for any $i= 0,\ldots,t$
\begin{equation}\label{rewrite}
 f_i = g_i + \sum_{\deg(g_j) < \deg(f_i)} R_{j,i} \, g_j, \qquad~~~ \textrm{with } R_{j,i} \in K[y],~\forall~ i,j.
 \end{equation}
So we can form a $(t+1)\times(t+1)$ transition matrix $R$, with entries in $K[y]$, such that:
\[ (g_0,\ldots,g_t) R = (f_0,\ldots,f_t).\]
As the $f_i$'s and $g_i$'s are indexed from $0$ to $t$, we also indexed the rows and columns of the matrix $R$ starting from $0$ for simplicity. Notice that from Lemma \ref{lema1} we have that $R_{0,j} = 0, ~\forall~ 1\le j\le t$.

At this point the  choice of $I_0$ as a lex-segment ideal comes again  into play. Namely, as we already mentioned, $I_0$ is a lex-segment ideal if and only if 
\[\deg(g_i)=\deg(f_i)\le\deg(f_{i+1})=\deg(g_{i+1}), \forall~0\le i < t.\]
Together with \eqref{rewrite} we obtain that $R_{i,i} =1$ for all $0\le i \le t$ and $R_{i,j}=0$ for $0\le j < i \le t$. Furthermore, as  $j\in\mathcal{J}$ is equivalent to $\deg(f_j)<\deg(f_{j+1})$, the matrix $R$ can be divided into blocks depending on the indices $\{j_1,\ldots,j_p\}= \mathcal{J}$ in the following way:
\begin{figure}[!h]
\setlength{\unitlength}{4.5mm}
\begin{picture}(23,22)
\linethickness{0.5mm}      
\multiput(1,0)(0,21){2}{\line(1,0){21}}
\multiput(1,0)(21,0){2}{\line(0,1){21}}

\linethickness{0.3mm}

\put(1,4){\line(1,0){21}}     
\multiput(1,12)(0,4){3}{\line(1,0){21}}

\multiput(6,0)(4,0){2}{\line(0,1){21}}  
\put(14,0){\line(0,1){4}}
\put(14,12){\line(0,1){9}}      
\put(18,0){\line(0,1){21}}

\linethickness{0.05mm}
\put(14,4){\line(0,1){8}}
{\Huge                
\put(3,1.5){$0$}
\put(7.5,1.5){$0$}
\put(11.5,1.5){$0$}
\put(3,13.5){$0$}
}
{\huge                
\multiput(6.66,12.5)(4,-4){1}{$0$}
\multiput(8.66,14.5)(4,-4){1}{$0$}
\put(4.66,18.5){0}
\put(19,1){0}
\put(20.5,2.5){0}
\put(1.66,17){0}
}
\put(2.5,20.25){$0 ~~\ldots$}
\put(6.5,20.25){$0 ~~\ldots$}
\put(10.5,20.25){$0 ~~\ldots$}
\put(18.5,20.25){$0 ~~\ldots$}

\put(1.5,20.25){1}
\put(2.5,19.25){1}
\put(5.5,16.25){1}
\put(6.5,15.25){1}
\put(9.5,12.25){1}
\put(18.5,3.25){1}
\put(21.5,0.25){1}
{\footnotesize
\put(6.25,19.25){$R_{1,j_1} ~\ldots$}
\put(10.25,19.25){$R_{1,j_2} ~\ldots$}
\put(19.15,19.25){$ \ldots ~R_{1,t}$}

\put(6.25,16.33){$R_{j_1-1,j_1} \ldots$}
\put(10.25,16.33){$ R_{j_1-1,j_2} \ldots$}
\put(18.15,16.33){$  \ldots~R_{j_1-1,t}$}

\put(10.25,12.33){$R_{j_2-1,j_2} \ldots$}
\put(18.15,12.33){$  \ldots~R_{j_2-1,t}$}

\put(18.15,4.33){$  \ldots~R_{j_p-1,t}$}
}
{\footnotesize
\put(0.5,0.33){$t $} 
\put(0.25,3.33){$j_p$}
\put(0.25,11.33){$j_2$}
\put(0.25,15.33){$j_1$}
\put(0.25,19.33){$1$}

\put(1.25, 21.33){$0$}         
\put(6.25, 21.33){$j_1$}
\put(10.25, 21.33){$j_2$}
\put(14.25, 21.33){$\quad~~~~ \ldots$}

\put(18.25, 21.33){$j_p$}
\put(21.5, 21.33){$t $}
}

\multiput(3.5,5)(0,1){7}{\circle*{0.15}}  
\multiput(8,5)(0,1){7}{\circle*{0.15}}
\multiput(20,6)(0,1){6}{\circle*{0.15}}

\multiput(8,18.5)(0,-0.5){3}{\circle*{0.1}}
\multiput(12,18.5)(0,-0.5){3}{\circle*{0.1}}
\multiput(12,14.5)(0,-0.5){3}{\circle*{0.1}}
\multiput(20,18.5)(0,-0.5){3}{\circle*{0.1}}
\multiput(20,14.5)(0,-0.5){3}{\circle*{0.1}}

\multiput(15,2)(1,0){3}{\circle*{0.15}}  
\multiput(15,14)(1,0){3}{\circle*{0.15}}
\multiput(15,18)(1,0){3}{\circle*{0.15}}

\multiput(3.5,18.75)(0.75,-0.75){3}{\circle*{0.1}} 
\multiput(7.5,14.75)(0.75,-0.75){3}{\circle*{0.1}}
\multiput(11,11)(1,-1){7}{\circle*{0.15}}
\multiput(19.5,2.75)(0.75,-0.75){3}{\circle*{0.1}}

\end{picture}
 \caption{The transition matrix $R$}\label{allR}
 \end{figure}\\
 \newpage
\noindent In order to  prove injectivity we want to show that  $R$ is the $(t+1)\times(t+1)$ identity matrix. We will see in the paragraphs below that the particular shapes of the matrices $A$, $B$ and $R$ (which are given by the fact that $I_0$ is the lex-segment ideal) will allow us to prove this block by block. 
The number of blocks will be $q+1$ where $q$ is the cardinality of the index set $\mathcal{I}$.
These blocks are $(t+1)\times(i_{a+1}-i_a)$ submatrices of $R$ obtained by taking the columns indexed by $i$, with $i_a\le i \le i_{a+1}-1$, for all $0\le a \le q$. Notice that the first column is always the transposed vector $(1,0,0,\ldots,0)$ of length $t+1$. This is why we do not consider it as part of any block.

 Here is the plan for the next and final part of the proof. 
 Notice that the columns of the matrix $R(X+A)$ are syzygies for the $g_i$'s. We subtract from these syzygies appropriate multiples of the columns of $X+B$ such that we obtain new syzygies of the $g_i$'s, this time with entries in $K[y]$. So all the entries must be actually 0. These entries will be linear combinations of the $R_{i,j}$'s, with coefficients the entries of $A$ and $B$ and some $y^{d_i}$. Given the restrictions on the degrees of the entries in $A$ and $B$ we will deduce some limitations on the degrees of the nonzero $R_{i,j}$'s. This will be the most technical part of the proof. In the end we will show that these bounds lead to a contradiction, so all $R_{i,j}$ with $i\neq j$ are 0. \\

We will start with the first block, i.e. with the matrix formed by the columns of  $R$ indexed from $i_0=1$ to $i_1-1$. During the proof we will point to how the induction works and why the proof for the first block is sufficient (see Remark \ref{induction}).  

Notice that if $i_1=j_1$, the first two blocks are already a submatrix of the identity matrix. This fact is easy to see in Figure \ref{allR}. Also, if $j_1=1$ and $j_2=i_1$  we obtain that the first block is degenerated (has zero rows).  For such blocks there is nothing to prove, so   we may assume for the proof that this situations do not occur. 

Fix   $i_0 \le s \le i_1-1$ and denote by $E'_s$ the  $s$th column of $R(X+A)$. This will be a syzygy for the $g_i$'s with  entries of the form:
$$E'_{r,s}= \sum_{k=0}^{t} R_{r-1,k}\a_{k+1,s}, $$
where $1 \leq r \leq t+1$. From the shape of the matrix $X+A$ (see Figure \ref{allA}) we can see that $\a_{k+1,s}=0$ if $k+1\ge i_1+1$. From the shape of the matrix $R$ (Figure \ref{allR}) we see that if $r>k+1$ then $R_{r-1,k}$ is already 0. So the only possibly nonzero $R_{r-1,k}$'s that actually appear in these first $i_1-1$ columns are the ones of the first block. This means that the range of the indices is: $r-1 \in \{0,\ldots,i_1 -2\}$ and $k \in \{i_0,\ldots,i_1-1\}$. Notice that when $r=1$ the entry is actually $\a_{1,s}.$   In particular the entries of $R(X+A)$ that are of  interest  have also  $2 \leq r \leq i_1$: 
\[E'_{r,s} =  \a_{r,s} + \sum_{k=i_0}^{i_1-1} R_{r-1,k}~ \a_{k+1,s},\]

As $\a_{s+1,s} = -x + \ldots$, to cancel out  the $x$'s in every entry of $R(X+A)$ we must subtract from this new syzygy:  $R_{r-1,s} \times $ (the $(r-1)$th column of $X+B$). We do this  for every $r = 2, \ldots, i_1$. So we obtain a new syzygy for the $g_i$'s, which we denote by $E_s$, with the following entries:
\[E_{r,s} = \a_{r,s} +  \sum_{k=i_0}^{i_1-1} R_{r-1,k}~\a_{k+1,s} -\sum_{l=1}^{i_1-1} R_{l,s}~ \b_{r,l}. \]
 Note  that $-x$ appears first with coefficient $R_{r-1,s}$ and then in the second sum also with coefficient $R_{r-1,s}$, we can conclude that each entry is a polynomial in $K[y]$. As we just added and subtracted syzygies, we obtain again a syzygy. But as in the initial terms of the $g_i$'s there appear different powers of $x$, we get that all entries must be 0. So we have the following equations:
\[ E_{r,s} = 0.\] 
We will interrupt the proof in order to make the following important remark:
\begin{remark}\label{induction}
We will explain here how the block by block proof works. In general, by the above arguments, when we look at the columns of $R(X+A)$ indexed from $i_a$ to $i_{a+1}-1$, the  entries that may be different from $\a_{r,s}$ involve  only the $R_{k,l}$'s of the first $a+1$ blocks. This means that also the equations $E_{r,s}=0$ involve only $R_{k,l}$'s from the first $a+1$ blocks.
Assume that the first $a$ blocks are already in the desired form ($R_{k,l}\neq 0 $ if\mbox{}f $k=l$). Then for $i_a\le s\le i_{a+1}-1$ the equations $E_{r,s}$ involve  only the $R_{k,l}$'s of the $(a+1)$-th block. With the inductive hypothesis the proof for the $(a+1)$-th block is analogous to the proof for the first block. One just needs to replace $i_0$ by $i_a$ and $i_1$ by $i_{a+1}$. For this reason we will present only the proof for the first block.
\end{remark}
Let us now resume the proof.
We are considering any $A, B \in \AA{I_0}$, so some of the $\a_{i,j}$'s and $\b_{i,j}$'s may be 0. 
This is the reason why we will often use the expression \lq\lq may have degree\rq\rq ~  instead of \lq\lq has degree\rq\rq.
 But we will always have that $\deg(\a_{ii}) = \deg(\b_{ii}) = d_i$. 
We can present  the equations that we obtained so far in a more compact way. 
The entries of the matrix $R$ below the diagonal and some of the ones above are already 0. For any indices $r$ and $s$ denote by:
\begin{eqnarray*}
 j(r) &:= &\Max\{ i ~:~ \deg(f_i)=\deg(f_r)\} + 1 = \Min\, \{ j \in \mathcal{J}~:~ j > r\}, \\
 \widetilde{j}(s) &:= & \Min\,\{ i ~:~\deg(f_i)=\deg(f_s)\}\phantom{-11}=  \Max \{ j \in \mathcal{J}~:~ j \le s\}. 
 \end{eqnarray*}
We can easily notice that  the equations $E_{r,s}$  actually are:
\[E_{r,s} =   a_{r,s} - b_{r,s} +  \sum_{k=j(r-1)}^{i_1-1} R_{r-1,k}~ \a_{k+1,s} -\sum_{l=1}^{\widetilde{j}(s)-1} R_{l,s}~ \b_{r,l} = 0.\] 
Recall that in this part we will show that $R_{r,s} = 0$ for $r < s$ with $1 \le r \le i_1$ and $i_0 \leq s \le i_1-1$. For each such pair $(r,s)$ we will deduce the limitations on the degree of $R_{r,s}$ from the equation $E_{r,s} = 0$. We will be able to do this, because the coefficient of $R_{r,s}$ will have maximal degree among the other coefficients that appear in $E_{r,s}$. So if $R_{r,s} \neq0$ there must exist  another $R_{k,l}$ with higher or equal degree.  We will order the $R_{r,s}$ and prove inductively that no $R_{r,s} \neq 0$ can have maximal degree, which means that all of them must actually be 0.
 
Depending on $r$ and $s$ there are four types of equations. From the first three types we can deduce directly upper bounds on the degree of  $R_{r,s}$. The fourth type may need to be modified in order to obtain such bounds.  \\
{\bf Type 1}:   If ($s \notin \mathcal{J}$ and $r \in \mathcal{J}$), or ($r \in \mathcal{I} \cup \{1\} $) then, as ($d_r \ge 2$ and $d_s =1$) or ($d_r \ge 3$ and $d_s \le 2$) we get:
\[
\deg(R_{r,s}) < \left\{ \begin{array}{lll}
  \deg(R_{r-1,k})& \textrm{~for some~} k \in \{r,\ldots, i_1-1\},&\quad \textrm{or}\\
\rule{0pt}{3ex} \deg(R_{l,s})& \textrm{~for some~}l \in \{1,\ldots,s-1\}, ~l \neq r.&
\end{array} \right.\\
\]
Notice that because $E'_{1,s} = \a_{1,s}$, we have that  $E_{1,s}$ is of   this type.\\
{\bf Type 2}:  If $s \notin \mathcal{J}$ and $r \notin \mathcal{J}$  then, as $d_r = 1$ and $d_s = 1$, we get: 
\[\left\{
\begin{array}{l}
\deg(R_{r,s}) < \left\{ \begin{array}{lll}
  \deg(R_{r-1,k})& \textrm{~for some~} k+1 \notin \mathcal{J},&\textrm{or}\\
\rule{0pt}{3ex} \deg(R_{l,s})& \textrm{~for some~} r\neq l \notin \mathcal{J}, \textrm{~or~} l > r,&
\end{array} \right.\\

\rule{0pt}{3ex}\quad\textrm{or}\quad \\

\rule{0pt}{3ex}\deg(R_{r,s}) \le \left\{
\begin{array}{lll}
  \deg(R_{r-1,k})&\textrm{~for some~} k < s, \textup{with~}k+1 \in \mathcal{J}, &\quad \textrm{or}\\
\rule{0pt}{3ex} \deg(R_{l,s}) &\textrm{~for some~} l \in \mathcal{J}, \textrm{~and~} l  < r. &
\end{array} \right.
\end{array}\right.
\]
{\bf Type 3}: 
 If $s \in \mathcal{J}$ and $r \in \mathcal{J}\setminus \mathcal{I}$  then, as $d_r = 2$ and $d_s = 2$, we get:
 \[\left\{
\begin{array}{l}
\deg(R_{r,s}) < \left\{ \begin{array}{lll}
  \deg(R_{r-1,k})& \textrm{~for some~} k \neq s-1,\ &\quad \textrm{or}\\
\rule{0pt}{3ex} \deg(R_{l,s})& \textrm{~for some~} l \neq r,&
\end{array} \right.\\
\rule{0pt}{3ex}\quad\textrm{or}\quad \\
\rule{0pt}{3ex}\deg(R_{r,s}) \le \deg(R_{r-1,s-1}).
\end{array}\right.
\] 
{\bf Type 4}:
If $s \in \mathcal{J}$ and $r \notin \mathcal{J}$  then we have  $d_r = 1$ and $d_s = 2$. So in this case we need to modify the original equation, because $R_{r-1,s-1}$ has coefficient of maximal possible degree. As $\a_{s,s}$ is the coefficient of $R_{r-1,s-1}$ we can look at the equation $E_{r-1,s-1}=0$. We will assume by induction that whenever the equation $E_{r-i,s-i} = 0 $ is of type 4, it has already been brought to the desired form for all $i>0$ (i.e. with $R_{r-i,s-i}$ having coefficient of maximal degree). There are  two sub-cases:\\
If $E_{r-1,s-1}=0$ is of type 1, then we redefine 
\[ E_{r,s}:= y^{d_{r-1} - 2}E_{r,s} + E_{r-1,s-1}.\]
The new coefficient of $R_{r,s}$ is  $y^{d_{r-1} - 2}\b_{r,r}$  and has maximal degree as we wanted.  \\
If $E_{r-1,s-1}=0$ is not of type 1, then we redefine 
\[ E_{r,s}:= y^{c_1}E_{r,s} + y^{c_2} E_{r-1,s-1},\]
where $c_1= \max\{0, \deg(\g_{r-1,s-1})-2\}$ and  $c_2= \max\{0, 2 -\deg(\g_{r-1,s-1})\}$ and  $\g_{r-1,s-1}$ is the coefficient of $R_{r-1,s-1}$ in $E_{r-1,s-1}$. 
If we still did not obtain a coefficient of maximal degree for $R_{r,s}$, then the new $R_{k,l}$'s that have coefficient of maximal degree, are of the form $R_{k,l}$, with $k < r-1$. So by repeating this procedure we will reach at some point the previous case.

For this type of equations there are three kinds of conclusions that we can draw:
\[\left\{
\begin{array}{l}
\deg(R_{r,s}) < \deg(R_{k,l})  \textrm{~~~~~~~~~for some~} R_{k,l},\\
\rule{0pt}{3ex}\quad\textrm{or}\quad \\
\rule{0pt}{3ex}\deg(R_{r,s}) \le \left\{
\begin{array}{lll}
  \deg(R_{k,l})&\textrm{~for some~} l+1 \in \mathcal{J} \textrm{~and~} k < r, &\quad \textrm{or}\\
\rule{0pt}{3ex} \deg(R_{k,l}) &\textrm{~for some~} k <j(r-1) \textrm{~and~} l < s, &
\end{array} \right.\\
\rule{0pt}{3ex}\quad\textrm{or}\quad \\
\rule{0pt}{3ex}\deg(R_{r,s}) = 0.
\end{array}\right.
\]
We were vague for the strict inequality, because we do not need to know the indices in that case.
The third possibility comes from the fact that when performing the above operations, we may find that the degree of the coefficient of $R_{r,s}$ is equal to the degree of the free term.

Now we just have to see that these inequalities imply $R_{r,s} =  0$. 
To be able to conclude, we also need to order the $R_{k,l}$'s in the following way:
\[ R_{r,s} < R_{k,l} \Leftrightarrow 
\left\{ \begin{array}{lllll}
  \deg(f_r)<\deg(f_k)&&&&\textup{or}\\
    \deg(f_r)=\deg(f_k),&\deg(f_s)<\deg(f_l),&&&\textup{or}\\
      \deg(f_r)=\deg(f_k),&\deg(f_s)=\deg(f_l),& k<r,&&\textup{or}\\
  \deg(f_r)=\deg(f_k),&\deg(f_s)=\deg(f_l),& k=r, &s<l&\\    
  \end{array} \right.\]
The following remarks are the key to the last part of the proof. They are an immediate consequence of the above discussion.
Assume that there exists an $R_{r,s}\neq0$ with $r\neq s$ in this block.
\begin{remark}Denote by $M := \max\{ \deg(R_{r,s})~:~  R_{r,s}\neq0,  1 \le r \le i_1 \textrm{~and~} i_0 \leq s \le i_1-1 \}$. 
\begin{itemize}
\item[1.] If $R_{r,s}$ is the smallest element according to the order above, then $E_{r,s}= 0$ is of type 1.
\item[2.] 
If $E_{r,s}=0$ is of type 1, then  $\deg(R_{r,s}) < M$. 
\item[3.]
If $\deg(R_{r,s}) = M > 0$, then $\deg(R_{r,s}) = \deg(R_{k,l})$  with $R_{r,s} > R_{k,l}.$
\end{itemize}
\end{remark}
Because of the type 4 equations, we have to distinguish the following two cases.\\
\emph{Case 1:} $M > 0$. Choose the minimal $R_{r,s}$ such that $\deg(R_{r,s}) = M$. Then, by the above remark, we already obtain a contradiction.\\
 \emph{Case 2:} $M = 0$.
In this case we can use induction on $r$, ignoring the complicated order defined above. Also, we will not need to modify the equations of type 4.\\
If $r =1$ then we are in the type 1 situation. So $\deg(R_{1,j}) < M = 0$.\\
Suppose $R_{i,j} = 0$ for all $i < r$. Then for all four types of equations, when we replace with 0 the $R_{i,j}$'s  with $i < r$, we get equations of the form:
\[  a_{r,s} - b_{r,s} -\sum_{l=r}^{s-1} R_{l,s}~ \b_{r,l} = 0.\] 
By construction  we have $\deg(\b_{r,r}) > \deg(\b_{l,r})$ if $l >r$.
 So we get again that if $R_{r,s} \neq 0$ then  $\deg(R_{r,s}) < M = 0$. This means we have $R_{r,j} =  0$ for all $j \le  i_1-1$, $j\neq r$.
\qed

\subsection{Proof of 3}
We will no longer assume in the proof of the surjectivity that $I_0$ is a lex segment ideal.

We want to find for every ideal $I \subset K[x,y]$ such that $\ini(I) = I_0$, a Hilbert-Burch matrix of the form $X+A$ with $A \in \AA{I_0}$.
It is easy to see that we can find a \GB~  $\{f_0,\ldots, f_t\}$ for $I$  with $\ini(f_i) = x^{t-i}y^{m_i}$ and leading coefficient 1. Due to the form of the leading terms of these polynomials, we can also assume that the monomials in the support of the $f_i$'s are not divisible by $x^t$ (except for the leading term of $f_0$). Otherwise, if  there exists an $i$ such that $cx^{t+h}y^l \in \textrm{Supp}(f_i)$, for some $h,l \ge 0$ and $c \in K^{*}$, we modify  $f_i$ to be $f_i - cx^hy^lf_0$. 

The S-polynomials $y^{d_i}f_{i-1} - xf_i$ have no term in their support divisible by $x^{t+1}$. So their reduction to 0 will be of the following form:
\begin{equation}\label{syz}
y^{d_i}f_{i-1} - xf_i + \sum_{j=0}^{t} a_{j,i} f_j = 0,
\end{equation}
with  $a_{i,j} \in K[y],~\forall~ i,j$ and
$ \ini(a_{j,i} f_j) \le \ini(y^{d_i } f_{i-1} - xf_{i}), \textrm{for all}~ j = 0,\ldots,t.$
The fact that $a_{i,j} \in K[y]$ follows by slightly modifying the proof of Lemma~\ref{lema1}.

These S-polynomials correspond to syzygies of the leading terms of the $f_i$'s:
\[y^{d_i}(x^{t-i+1}y^{m_{i-1}}) - x(x^{t-i}y^{m_i}).\]
As these syzygies generate the syzygy module of  $\ini(f_i)$, Schreyer's theorem implies that the equations (\ref{syz}) generate the syzygy module of the $f_i$'s.

Setting these syzygies as columns of a matrix, we obtain a $(t+1) \times t$ matrix of the form $X + A$, where the entries of $A$ are elements of $K[y]$. By the Hilbert-Burch theorem we have that the $t$-minors of this matrix generate the ideal $I$.

By the inequality of  the leading terms in (\ref{syz}) we obtain the following restrictions on the degrees of the $a_{i,j}$: 
\begin{equation}\label{grade}
 \deg(a_{i,j}) \le \left.\bigg\{ \begin{array}{cc}
                                 i - j  +m_j - m_{i-1} -1&\textup{if}~ i \le j, \\
                               \rule{0pt}{3ex} i - j  +m_j - m_{i-1} \phantom{-1..}&\textup{if} ~ i > j.\\
                                   \end{array}\right.
\end{equation}

Now we will show how to modify this matrix in order to obtain a new matrix $X+A'$ with $A' \in \AA{I_0}$. It is easy to see that elementary operations on the Hilbert-Burch matrix  do not change the fact that the maximal minors generate the ideal.
We will use a sequence of  pairs of standard operations, that we will call reduction moves.

Take $i \neq j$, with $i \in \{1,\ldots, t+1\}$ and $j \in \{1,\ldots,t\}$. Suppose we have
\begin{equation}
 \deg(a_{i,j}) \ge \left.\bigg\{ \begin{array}{cc}
                                d_i&\textup{if}~ i < j, \\

      \rule{0pt}{3ex}d_j&\textup{if} ~ i > j.\\
                                   \end{array}\right.
\end{equation}
If $i < j$, (resp. $i>j$)  denote  by $q_{i,j}$ the quotient of the division of $a_{i,j}$ by $y^{d_i} + a_{i,i}$, (resp. $y^{d_j} + a_{j,j}$). So we  have:
\begin{equation}
 a_{i,j} = \left.\bigg\{ \begin{array}{cc}
                                (y^{d_i} + a_{i,i})q_{i,j} + r_{i,j}, ~\textrm{with}~ \deg(r_{i,j}) < d_i,&\textup{if}~ i < j, \\

      \rule{0pt}{3ex}( y^{d_j} + a_{j,j})q_{i,j} + r_{i,j}, ~\textrm{with}~ \deg(r_{i,j}) < d_j,&\textup{if} ~ i > j.\\
                                   \end{array}\right.
\end{equation}
Notice that, as the degree of $a_{i,j}$ is bounded as in (\ref{grade}), we also have: 
\begin{equation}\label{q}
\deg(q_{i,j}) \le \left.\bigg\{ \begin{array}{cc}
                                 i-j +m_j - m_i -1&\textup{if}~ i < j, \\

        \rule{0pt}{3ex}i - j  +m_{j-1} - m_{i-1}&\textup{if} ~ i > j.\\
                                   \end{array}\right.
\end{equation}
We will call  a ($i,j$)-reduction move the sequence of the following two standard operations:\\
\emph{If $i<j$}\begin{enumerate}
\item[-] Add the $i$th column multiplied by $ - q_{i,j}$ to the $j$th column. 
\item[-] Add the ($j+1$)th row multiplied by $ q_{i,j}$ to the ($i+1$)th row.
\end{enumerate}
\emph{If $i>j$}\begin{itemize}
\item[-] Add the $j$th row multiplied by $ - q_{i,j}$ to the $i$th row. 
\item[-] If $j \ge 2$, add the ($i-1$)th column multiplied by $ q_{i,j}$ to the $(j-1)$th column.
\end{itemize}
In the second case, when $j=1$ we only do the first move.

The first operation, reduces the degree of the entry in the position ($i,j$), by replacing $a_{i,j}$ with $r_{i,j}$. The second one cancels the multiple of $x$ that appeared in the position ($i+1,j$) if $i<j$, (respectively the position ($i,j-1$) if $i>j$) as a consequence of the first move. 
Let us see that after each such reduction move, the degrees in the new matrix are still bounded as in (\ref{grade}). We take a look at what happens for $i<j$, the other case being similar.

For the first operation, for all $k = 1,\ldots, t+1$, we have:
\[\deg(a_{k,i}q_{i,j})\le (k-i+m_i-m_{k-1}) + (i-j +m_j - m_i -1) = k  - j  +m_j -m_{k-1} -1.\]

For the second operation, for all $k= 1, \ldots, t$, we have:
\[\deg(a_{j+1,k}q_{i,j})\le j+1 - k + m_k - m_j + i-j +m_j - m_i -1 = i+1  - k  +m_k -m_i  -1.\]

As it is clear that every reduction move influences more elements, not just the one it is aimed at, we will have to determine which entries are influenced \lq\lq most\rq\rq. This way, we will be able to conclude that after a finite sequence of reduction moves we can  reduce the degree of an entry by 1 and leave all other degrees unchanged. Thus, in the end we will be able to reduce the matrix to the desired form. Notice that, once the matrix is $X+A'$ with   $A' \in \AA{I_0}$, by definition we cannot make any more reduction moves.\\

Let us denote with Red$_{i,j}$ the reduction moves. We will say that  Red$_{i,j}$ is maximal in $a_{k,l}$ (or just in $(k,l)$) if $a_{k,l}$ is modified such that $\deg(a_{k,l})$ reaches the upper bound given in (\ref{grade}). It is easy to see that in order to get this, also $\deg(q_{i,j})$ has to reach the upper bound given in (\ref{q}).

In the next part, using easy computations, we will find the indices ($k,l$) in which Red$_{i,j}$ is maximal. There are two main cases depending on $i$ and $j$, each of them having four sub-cases. The computations follow in each case the same pattern. As they are trivial but rather long, we will present  the details only in the first two sub-cases.

\emph{Case 1:} $i < j$. We have to have $\deg(q_{i,j}) = i - j + m_j - m_i -1$ according to (\ref{q}). By definition Red$_{i,j}$ will act on the elements of the $j$th column and on those of the ($i+1$)th row. Let us first take a look at what happens on the $j$th column.

Let $k \in \{1,\ldots,t+1\}$. We want to see what the degree of $a_{k,i}q_{i,j}$ could be:

If $k<i$, then
\begin{eqnarray*}
\deg(a_{k,i}q_{i,j}) & = & k- i + m_i - m_{k-1} -1 + i - j + m_j - m_i -1\\
\rule{0pt}{3ex}~     & =& (k  - j + m_j - m_{k-1} -1) -1,
\end{eqnarray*}
so it cannot reach the upper bound in (\ref{grade}).

If $k \ge i$, then
\begin{eqnarray*}
\deg(a_{k,i}q_{i,j}) & = & k- i + m_i - m_{k-1}  + i - j + m_j - m_i -1\\
\rule{0pt}{3ex}~     & =& k  - j + m_j - m_{k-1} -1,
\end{eqnarray*}
so it can be maximal only if $k < j$.

On the ($i+1$)th row, with similar computations we obtain that.\\
If $k\le j+1$ the degree of $a_{j+1,k}q_{i,j}$ reaches the upper bound only if $k > i+1$.\\
If $k > j+1$ the degree of $a_{j+1,k}q_{i,j}$ cannot be maximal.\\
So for the reduction moves that act above the diagonal the positions that could be maximal are:
\begin{eqnarray*}
(k,j) & &\textrm{if}~~ i<k<j,\\
(i+1,k) &&\textrm{if}~~ i+1 < k \le j+1.
\end{eqnarray*}

\emph{Case 2:} $i > j$. We have to have $\deg(q_{i,j}) = i - j + m_{j-1} - m_{i -1}$ according to (\ref{q}). By definition Red$_{i,j}$ will act on the elements of the $i$th row and on those of the ($j-1$)th column. Using arguments similar to the ones above we obtain that for the reduction moves that act below the diagonal the positions that could be maximal are:
\begin{eqnarray*}
(i,k) & &\textrm{if}~~ k<j \phantom{-1}~\,~~\textrm{or}~~ k>i,\\
(k,j-1) & & \textrm{if}~~  k  < j-1~~ \textrm{or}~~ k \ge i-1.
\end{eqnarray*}
Here is a graphical representation of the positions that may be maximal for Red$_{i,j}$:\\
\begin{center}
\setlength{\unitlength}{4mm}
\begin{picture}(25,14)
\linethickness{0.2mm}
\multiput(1,2)(10,0){2}{\line(0,1){11}}
\multiput(15,2)(10,0){2}{\line(0,1){11}}

\multiput(1,2)(0,11){2}{\line(1,0){10}}
\multiput(15,2)(0,11){2}{\line(1,0){10}}
\linethickness{0.1mm}
\put(1,6){\line(1,0){10}}
\multiput(1,9)(0,1){2}{\line(1,0){10}}

\multiput(15,5)(0,1){2}{\line(1,0){10}}
\multiput(15,9)(0,1){2}{\line(1,0){10}}
\multiput(4,2)(1,0){2}{\line(0,1){11}}
\multiput(8,2)(1,0){2}{\line(0,1){11}}

\multiput(18,2)(1,0){2}{\line(0,1){11}}
\put(23,2){\line(0,1){11}}

\multiput(4,10)(1,-1){2}{\circle*{0.3}}
\multiput(18,10)(1,-1){2}{\circle*{0.3}}
\put(8,6){\circle*{0.3}}
\put(23,5){\circle*{0.3}}

\put(8,10){\circle{0.5}}
\put(19,5){\circle{0.5}}
\linethickness{1.5mm}
\put(5.5,9){\line(1,0){3.7}}
\put(15,5){\line(1,0){3}}
\put(23.5,5){\line(1,0){1.5}}
\put(8,6.5){\line(0,1){2.5}}
\put(18,2){\line(0,1){4.2}}
\put(18,10.5){\line(0,1){2.5}}

{\footnotesize
\put(0.5,5.75){$j$}
\put(-0.5,8.75){$i+1$}
\put(0.5,9.75){$i$}
\put(14.5,4.75){$i$}
\put(13.5,5.75){$i-1$}
\put(14.5,8.75){$j$}
\put(13.5,9.75){$j-1$}
\put(3.8,13.33){$i$}
\put(4.4,13.33){$i+1$}
\put(7.8,13.33){$j$}
\put(8.4,13.33){$j+1$}
\put(17.3,13.33){$j-1$}
\put(19,13.33){$j$}
\put(22.9,13.33){$i$}
}
\put(5,1){$i<j$}
\put(19,1){$i>j$}

\end{picture}
\end{center}
The circle represents the position of the $a_{i,j}$ that is being reduced, the dots represent entries on the diagonal. The thin lines are columns, respectively rows, and the thick lines represent the positions in which maximal elements for Red$_{i,j}$ may appear.

Now we will show how, using these reduction moves, we can bring the Hilbert-Burch matrix to the form we want to.
We will proceed by induction on $t$.
When $t =1$ there is not much to prove, so we can assume by induction that the upper left $t \times (t-1)$ part of the matrix is already in the form we want.  
We will show now how we can bring the elements of the last row and column to the desired form. 
We will start with the last row. 

 Suppose also that we have $\deg(a_{t+1,j}) = t+1-j+m_j-m_t > d_j-1$ and that we have already brought the elements $a_{t+1,t} , \ldots, a_{t+1,j+1}$ to the desired degree for some $j \in \{1,\ldots ,t \}$.  

First we do the reduction move Red$_{t+1,j}$. This will have maximal degree. Then we will apply the other reduction moves that are necessary to bring the $t \times (t-1)$ upper left part to the desired form. This can be done by induction. It is easy to see from the graphical representation, that for all these moves, the elements $a_{t+1,j},\ldots,a_{t+1,t}$ will not be maximal. Now, also by induction we will bring to the desired form also the elements $a_{t+1,t},\ldots,a_{t+1,j+1}$. Again, as the reduction moves will not be of maximal degree, by definition the element $a_{t+1,j}$ will not be maximal for any of them. So after performing all these reductions we will have $\deg(a_{t+1,j}) < t+1-i+m_j-m_t$.

This whole sequence of operations depends on the first reduction move Red$_{t+1,j}$. It is easy to notice that, even if we will start with a reduction that is not of maximal degree, we will still reduce the degree of $a_{t+1,j}$ by at least one. So we can do this until $\deg(a_{t+1,j}) \le d_j-1$.

Let us now bring also the elements on the last column to the desired form. Suppose that the first $t-1$ columns and $a_{t+1,t}$ are of the desired form. Let  $\deg(a_{i,t}) = i-t-1+m_t - m_{i-1} > d_i-1$ and  suppose that we brought  $a_{1,t},\ldots, a_{i-1,t}$ to the desired form. 

We apply now Red$_{i,t}$ which will be of maximal degree. Then we will  bring the rest of the matrix, that we assumed had already  the desired form, in the desired form again. These operations can be done by induction, and it is easy to see that the elements $a_{1,t}, \ldots,a_{i-1,t}$ will not be maximal. So also $a_{i,t}$ will not be maximal for any reduction. This means that we have reduced its degree by at least one.

This whole sequence of operations depends on the first reduction move Red$_{i,t}$ and, as we explained in the previous case, even if we will start with a reduction that is not of maximal degree, we will still reduce the degree of $a_{i,t}$ by at least one. So we can do this until $\deg(a_{i,t}) \le d_i-1$. We have thus proven the surjectivity.

\newpage

\section{Dimension}
Let $I_0$ be a monomial lex-segment ideal of $K[x,y]$ as in the previous section. In this part we will show how to compute the dimension of the affine space $V(I_0)$ that we parametrized.  For every $i\ge 0$ we denote by $h_i= \dim_K((R/I_0)_i)$, that is the value of the \HF~ of $R/I_0$ in $i$. Using the notation introduced so far we have:
$$h_i = i+1, \textup{~for~} 0\le i \le t-1.$$
If we denote by $\b_{0,i} = \b_{0,i}(I_0)$ the number of minimal generators of $I_0$ of degree $i$ we also have the following:
\begin{eqnarray*}
\b_{0,t}& = &h_{t-1}- h_t+1,\\
\b_{0,i}&=& h_{i-1}-h_{i}, \textup{~for~} i>t,\\
h_i&=& \sum_{j>i} \b_{0,j}, \textup{~for~} i\ge t .
\end{eqnarray*}
Recall that
$ \mathcal{J} $= $\{j_1,j_2,\ldots, j_p\}$= \mbox{$\{ j \in 1, \ldots , t ~:~ d_j \ge 2 \}.$}
If we set by convention $j_{p+1} = t+1$,  we have $j_1= b_{0,t}$ and $j_{i+1} -j_i = \b_{0,t-j_i+m_{j_i}}$.  Note that these equalities depend on the fact that $I_0$ is a lex-segment ideal.

\begin{proposition}\label{dim}
Let $I_0 \subset R$ be a monomial lex-segment ideal. Using the above notation we have the following formula:
\[\dim(V(I_0)) = \dim_K(R/I_0) + 1 + \sum_{i\ge 1} h_i(h_{i-1}-h_{i-2}).\]
\end{proposition}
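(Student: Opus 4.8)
The plan is to compute $N = \dim(V(I_0)) = \dim_K(\AA{I_0})$ directly from the definition $N = \sum_{b_{i,j}\ge 0}(b_{i,j}+1)$, using the explicit description of the bounds $b_{i,j}$ available in the lex-segment case, and then to match the resulting combinatorial sum against the claimed formula expressed via the Hilbert function values $h_i$. Recall from the computations preceding Proposition~\ref{dim} that for a lex-segment ideal the bound above the diagonal ($i\le j$) simplifies to $b_{i,j} = d_i - 1$, which is always $\ge 0$, so each of these entries contributes $d_i$ to $N$; and below the diagonal ($i>j$) the bound is $b_{i,j} = \min\{i-j+m_j-m_{i-1},\, d_j-1\}$, where the first argument is $i - j - (d_{j+1}+\cdots+d_{i-1}) \le 1$. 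So the first step is to split $N$ as a sum over the above-diagonal part (which is immediately $\sum_{i=1}^{t} i\cdot d_i$ after counting how many columns $j\ge i$ exist, i.e. $\sum_{i=1}^t (t-i+1)d_i$ — care with the column count) plus the below-diagonal part, and to reorganize each piece.

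Next I would translate everything into Hilbert-function language. The key dictionary, stated just before the proposition, is: $h_i = i+1$ for $0\le i\le t-1$; $\b_{0,t} = h_{t-1}-h_t+1 = j_1$; and $j_{i+1}-j_i = \b_{0,\,t-j_i+m_{j_i}}$, together with $h_i = \sum_{j>i}\b_{0,j}$ for $i\ge t$. Since $d_j\ge 1$ always, and $d_j \ge 2$ exactly when $j\in\mathcal J$, the above-diagonal contribution $\sum_{i=1}^t(t-i+1)d_i$ can be written as $\binom{t+1}{2}$ (the contribution of the "generic" part where all $d_i=1$) plus a correction $\sum_{j\in\mathcal J}(t-j+1)(d_j-1)$ coming from the jumps. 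The term $\dim_K(R/I_0) = \sum_{i\ge 0} h_i$ contains the piece $\sum_{i=0}^{t-1}(i+1) = \binom{t+1}{2}$, which should absorb exactly this generic part; and the "$+1$" in the formula should come from the single extra generator counted in $\b_{0,t}$ versus the $d_i$-count. The below-diagonal entries, whose bounds are at most $\min\{1,d_j-1\}$, will contribute small (0, 1, or $2$) amounts concentrated near the "special" indices $\mathcal I,\mathcal J$, and these should reassemble into the cross term $\sum_{i\ge 1} h_i(h_{i-1}-h_{i-2})$ once one expresses the differences $h_{i-1}-h_{i-2}$ in terms of $\b_{0,\bullet}$ and hence in terms of the jump pattern of the $d_j$'s.

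The cleanest route is probably not to stare at the double sum but to argue by a telescoping/induction on the number of "special" indices, or equivalently on $t$: removing the last generator of $I_0$ (decreasing $t$ by one, or merging two consecutive $d$'s) changes both sides by a controlled amount, and one checks the increments agree. Alternatively, one fixes the Hilbert function $H$ of $R/I_0$ and notes both sides depend only on $H$ (the left side because $V(I_0)$ with its affine structure depends only on $H$ by the parametrization of Theorem~\ref{main} — indeed $\AA{I_0}=\mathbb A^N$ and $N$ is determined by the $d_i$, which are determined by $H$ in the lex-segment case), then verifies the identity on the level of the numerical sequence $(h_i)$. I would set up the bookkeeping: let the $\b_{0,i}$ for $i\ge t$ encode the column-lengths of the staircase, rephrase $\sum_{i\ge 1}h_i(h_{i-1}-h_{i-2})$ as a sum of products of partial sums of the $\b_{0,j}$, and rephrase $N$ the same way, then compare term by term.

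The main obstacle I anticipate is the below-diagonal bookkeeping: the quantity $i-j+m_j-m_{i-1}$ can be negative (so the entry contributes nothing, not even $1$), can equal $0$, or can equal $1$, and which case occurs depends delicately on the positions of the indices between $j$ and $i$ that lie in $\mathcal J$ versus $\mathcal I$ — this is precisely the block structure drawn in Figures~\ref{allA} and~\ref{partA}. Getting the count of the $b_{i,j}=0$ and $b_{i,j}=1$ entries exactly right, and showing their total equals the cross term $\sum_{i\ge1}h_i(h_{i-1}-h_{i-2})$ (which itself needs unwinding into the $\b_{0,\bullet}$'s), is where the real work lies. Everything else is the kind of routine summation of arithmetic progressions that the paper elsewhere describes as "trivial but rather long."
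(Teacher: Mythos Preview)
Your overall plan---count parameters by splitting into the entries on/above the diagonal and those strictly below, then translate into Hilbert-function language---is exactly the skeleton of the paper's proof. What you are missing are two simplifications that turn the below-diagonal bookkeeping you flag as ``the main obstacle'' into a three-line computation.

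First, the on/above-diagonal contribution $\sum_{i=1}^{t}(t-i+1)d_i$ is \emph{exactly} $\dim_K(R/I_0)$. Write $d_i=m_i-m_{i-1}$ and sum by parts (Abel summation): the result is $\sum_{i=1}^{t} m_i$, which counts the standard monomials. So there is no need to split off $\binom{t+1}{2}$ and carry a correction term; the entire summand $\dim_K(R/I_0)$ in the formula is accounted for by this block alone, and the remaining task is to show that the below-diagonal parameters equal $1+\sum_{i\ge 1}h_i(h_{i-1}-h_{i-2})$.

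Second, for that remaining part the paper does not enumerate the positions with $b_{i,j}=0$ versus $b_{i,j}=1$ directly. Since each below-diagonal entry contributes $0$, $1$, or $2$, one has
\[
(\text{parameters below diagonal}) \;=\; (\#\ \text{entries}) \;+\; (\#\ \text{entries of degree }1) \;-\; (\#\ \text{forced zeros}),
\]
and each of the three counts can be read off the block decomposition of Figures~\ref{allA} and~\ref{partA} and matched to a piece of the target via the dictionary $j_1=\b_{0,t}$, $j_{k+1}-j_k=\b_{0,\,t-j_k+m_{j_k}}$. Concretely: the number of entries is $\binom{t+1}{2}=1+\sum_{i=1}^{t-1}h_i(h_{i-1}-h_{i-2})$ (since $h_i(h_{i-1}-h_{i-2})=i+1$ for $i<t$); the degree-$1$ entries form the first column below each $j\in\mathcal{J}$-block and number $t+1-j_1=h_t=h_t(h_{t-1}-h_{t-2})$; and the forced zeros sit in rectangular blocks whose sizes are products $(j_{p+1}-j_{k+2})(j_{k+1}-j_k)$, which via the dictionary become $h_{i}(h_{i-2}-h_{i-1})=-h_i(h_{i-1}-h_{i-2})$ for the relevant $i>t$. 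Summing gives the claim with no induction and no case analysis.

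Your proposed alternatives (induction on $t$, or a direct term-by-term match after rewriting both sides via the $\b_{0,\bullet}$'s) would also work, but they are longer and you have not carried them out; the two observations above are what let the paper finish in a paragraph.
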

\begin{proof}
To prove the proposition we just have to count the number of coefficients that appear in a matrix $A \in \AA{I_0}$.  As a polynomial in $K[y]$ of degree at most $r$ has $r+1$ coefficients, form the entries on and above the diagonal we get:
\[\sum_{i=1}^{t} (t-i+1)d_i  = \sum_{i=1}^{t} m_i = \dim_K(R/I_0).\]
We will count the number of coefficients below the diagonal in the following way: 
\[ \sharp \textup{~of entries} + \sharp \textup{~of entries of degree} 1 - \sharp \textup{~of 0s}.\]
The number of entries in a  triangular block of size $t$ is $t(t+1)/2$. For $i<t$ we have
$h_i(h_{i-1}-h_{i-2}) = i+1$, so the number of entries is
\[\frac{t(t+1)}{2} = 1 + \sum_{i=1}^{t-1}h_i(h_{i-1}-h_{i-2}).\]
By looking at the shape of the matrices in $\AA{I_0}$ described in Figure \ref{allA} and \ref{partA}, it is easy to see that the number of entries below the diagonal that have degree 1 is
\[ t+1-j_1 = \sum_{i>t}\b_{0,i} = h_t.\]
The entries of a matrix in $\AA{I_0}$ that are always $0$ are grouped in vertical blocks of size \mbox{$(j_{p+1}-j_{i+2})\times (j_{i+1}-j_{i})$}. As we have
\[ j_{p+1}-j_i = \sum_{k >i-1} \b_{0,t-j_k+m_{j_k}} = h_{(t-j_i+m_{j_i})+1},\]
and 
$ j_{i+1}-j_{i} = \b_{0,t-j_{i}+m_{j_{i}}} = h_{ (t-j_{i}+m_{j_{i}}) - 1} - h_{t-j_{i}+m_{j_{i}}},$
we obtain that the number of zeros is 
\[ \sum_{i>t}h_i(h_{i-1}-h_{i}).\]
Taking into account that $h_t = h_t(h_{t-1}-h_{t-2})$ and adding the above numbers with their proper signs, we obtain that below the diagonal we have exactly
\[1 + \sum_{i\ge1} h_i(h_{i-1}-h_{i-2})\]
coefficients that are parameters.
\end{proof}

As we have $\dim_K(R/I_0) = \sum_{i\ge0} h_i$, we can write the formula of the dimension in a more compact way, namely:
\[ \dim(V(I_0)) = 1 + \sum_{i\ge 0 }h_i(h_{i-1}-h_{i-2} +1).\]

We will see in Section 7 that, if the characteristic of  $K$ is   $p> \max\{j~:~h_j\neq0\}$ or zero, then  $\AA{I_0}$ also parametrizes an affine open subset of the Hilbert function strata of $Hilb^{n}(\mathbb{P}^2)$. This means that the formula found in Proposition \ref{dim} is also valid for $Hilb^H(\mathbb{P}^2)$, where $H$ denotes the Hilbert function of an ideal $ K[x,y,z]/I $, where $I$ defines  a zero-dimensional scheme in $\mathbb{P}^2$. With this notation the $h$'s in the dimension formula are: $h_i = H_i - H_{i-1}$.  The dimension of $Hilb^H(\mathbb{P}^2)$ was determined by G.Gotzmann in \cite{got}. A different formula was also given by G. Ellingsrud and S. A. Str\o mme in \cite{ES1,ES2}, then by A. Iarrobino and V. Kanev in \cite{IK}. The latest formula was found by K. De Naeghel and M. Van den Bergh in \cite{NvdB}. Our formula is nearest to the latter one. Although the methods used in the two proofs are different, a short computational passage transforms one formula  into the other. 

Given that we compute the dimension by counting the parameters in the Hilbert-Burch matrix, it is easy to check that the following bounds hold. The lower bound generalizes slightly the result obtained by  K. De Naeghel and M. Van den Bergh in \cite[Corollary 6.2.3]{NvdB}.
\begin{corollary} Let $K$ be a field of characteristic  $p> \max\{j~:~h_j\neq0\}$ or of characteristic 0 and
let $H$ and $h$ be as above. Denote by $\Lex(h)$ the lex-segment ideal of $R= K[x,y]$ with Hilbert function $h$, by $n = \dim_K(R/\Lex(h))$ and by $t = \mu(\Lex(h))-1$ the number of  minimal generators of $\Lex(h)$ minus 1. For $n \ge 2 $ we have 
\[ \max\{n+t, ~n+2\}~\le~  \dim Hilb^H(\mathbb{P}^2) ~\le~ 2n.\]
\end{corollary}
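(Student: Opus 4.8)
The plan is to derive both bounds directly from the dimension formula in Proposition \ref{dim}, namely
\[ \dim Hilb^H(\mathbb{P}^2) = 1 + \sum_{i\ge 0} h_i(h_{i-1} - h_{i-2} + 1), \]
using the fact (established in Section 7 under the characteristic hypothesis) that $\AA{I_0}$ parametrizes an affine open subset of the Hilbert function stratum, so $\dim Hilb^H(\mathbb{P}^2) = \dim V(\Lex(h))$. Throughout I will write $h = (h_0, h_1, \ldots)$ for the Hilbert function, with $h_i = i+1$ for $0 \le i \le t-1$, $h_j = 0$ for $j > \max\{j : h_j \neq 0\}$, and $\sum_i h_i = n$; by the unimodality-type constraints coming from Macaulay's theorem for a codimension-two quotient, the sequence $h$ first strictly increases with steps of $1$ (for $i \le t-1$), then is non-increasing once it reaches its maximum. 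This structure makes $h_{i-1} - h_{i-2}$ equal to $1$ for $1 \le i \le t$ and $\le 0$ afterwards, which is exactly what controls the sum.

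For the \textbf{upper bound} $\dim Hilb^H(\mathbb{P}^2) \le 2n$: I would split the sum over $i$ into the range $0 \le i \le t-1$, where $h_{i-1}-h_{i-2}+1 = 2$ (with the conventions $h_{-1}=h_{-2}=0$ handled at $i=0,1$ — at $i=0$ the term is $h_0\cdot 1 = 1$, at $i=1$ it is $h_1\cdot 2 = 4$; one checks $1 + \sum_{i=0}^{t-1} h_i(h_{i-1}-h_{i-2}+1) = 1 + 1 + \sum_{i=1}^{t-1} 2h_i = 2\sum_{i=0}^{t-1}h_i$ after absorbing the leading $1$'s), and the range $i \ge t$, where $h_{i-1}-h_{i-2}+1 \le 1$ so that each term is at most $h_i$. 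Summing, $\dim Hilb^H(\mathbb{P}^2) \le 2\sum_{i=0}^{t-1} h_i + \sum_{i \ge t} h_i \le 2\sum_{i\ge 0} h_i = 2n$. The only care needed is the bookkeeping of the boundary terms at $i=0,1$ and the absorption of the additive $1$, which is routine.

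For the \textbf{lower bound} $\max\{n+t,\, n+2\} \le \dim Hilb^H(\mathbb{P}^2)$: rewrite the formula as $\dim = n + 1 + \sum_{i\ge 1} h_i(h_{i-1}-h_{i-2})$ (using $\sum_i h_i = n$), so it suffices to show $\sum_{i\ge 1} h_i(h_{i-1}-h_{i-2}) \ge t - 1$ and also $\ge 1$. For $n+2$: the terms with $1 \le i \le t$ contribute $h_i\cdot 1 \ge h_1 = 2$ from $i=1$ alone, and the negative terms for $i>t$ are dominated because once $h$ starts decreasing, each drop $h_i - h_{i-1} < 0$ is compensated by the positive contributions — but actually the cleanest route is simply: among $i\ge 1$ the term $i=1$ gives $h_1(h_0-h_{-1}) = 2$, and it remains to see the total of the remaining terms is $\ge -1$; alternatively use the combinatorial count of below-diagonal parameters (Figures \ref{allA}, \ref{partA}), which is manifestly $\ge 1$, giving $\dim \ge n + 2$. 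For $n+t$: here I would argue geometrically/combinatorially rather than by manipulating the sum — the matrix $X+A$ has a nonzero superdiagonal-type region of parameters whose count, read off the block description, is bounded below by the number of columns times something, yielding $t-1$ free parameters below the diagonal at minimum; combined with the $n$ parameters on and above the diagonal (which is exactly $\dim_K(R/I_0)$) and the extra $1$, we get $n + 1 + (t-1) = n+t$. The main obstacle is this lower bound: unlike the upper bound it is not a one-line inequality on the closed-form sum, because the negative terms $h_i(h_{i-1}-h_{i-2})$ for $i$ past the peak of $h$ can be individually large, so I expect to need the explicit parameter count from the Hilbert–Burch matrix shape (the blocks of constants and the ${\scriptstyle\bullet}y^s$ entries in Figures \ref{allA} and \ref{partA}) to see that at least $t-1$ genuine parameters survive below the diagonal regardless of how $h$ decays.
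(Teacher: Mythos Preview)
Your upper-bound argument via the closed formula can be made to work, but note the split is off by one: at $i=t$ you still have $h_{t-1}-h_{t-2}+1=2$, so the ``tail'' where the factor drops to $\le 1$ begins at $i=t+1$, not $i=t$. More to the point, the paper bypasses the formula entirely and reads both bounds straight off the Hilbert--Burch matrix. Since the entries of $A\in\AA{\Lex(h)}$ on and above the diagonal contribute exactly $n$ parameters (as you noted), the whole corollary reduces to bounding the below-diagonal count between $\max\{t,2\}$ and $n$. For the upper bound the paper just uses that each below-diagonal entry in column $j$ has degree $\le d_j-1$, hence at most $d_j$ coefficients, and there are $t+1-j$ such entries; summing gives $\sum_j (t+1-j)d_j=\sum_j m_j=n$. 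This is a one-line count, considerably cleaner than manipulating the closed formula.

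Your lower-bound argument has a real gap: you correctly sense that the formula route is obstructed by the negative terms past the peak of $h$, and you say you ``expect to need'' the matrix shape to find $t-1$ surviving parameters, but you never identify \emph{which} entries furnish them, and your bookkeeping (``$t-1$ parameters plus an extra $1$'') is muddled. The clean observation you are missing is this: for a lex-segment ideal every $d_i\ge 1$, so for each $i=1,\ldots,t$ the subdiagonal entry in position $(i+1,i)$ has degree bound $\min\{u_{i+1,i},\,d_i-1\}=\min\{1,\,d_i-1\}\ge 0$, hence contributes at least one parameter. That already gives $t$ below-diagonal parameters and hence $\dim\ge n+t$. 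The separate bound $n+2$ is only needed when $t=1$; then $n\ge 2$ forces $d_1\ge 2$, so the single subdiagonal entry $(2,1)$ has degree bound $\min\{1,d_1-1\}=1$ and contributes two parameters.
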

\begin{proof}
We have seen in the proof of Proposition \ref{dim} that the number of parameters on and above the diagonal is always $n$. By the bounds in (\ref{A}) we obtain that below the diagonal we may have also at most $n$ parameters. 

On the other hand, as we are considering matrices in $\AA{\Lex(h)}$, all the $d_i$'s are greater or equal to 1. Notice that $t$ has the same meaning as in the proof of Theorem \ref{main}. So for $i = 1,\ldots, t$ each entry indexed $(i+1, i)$ contributes with at least one parameter. This proves the lower bound whenever $t \ge 2$. In the extreme case when $\Lex(h)$ is generated by two elements, as $n\ge2$, we must have $d_1\ge 2$. So in this case there are exactly two parameters that appear below the diagonal.
\end{proof}

\section{Examples}

We will show now with three examples how the proof of the main theorem  works. We  start with a \lq\lq small\rq\rq~example from which it will be easier to see the main idea behind the proof of the injectivity. Then, we are forced to choose a rather \lq\lq large\rq\rq~example in order to present  the more technical arguments that we use in the proof. The last example shows how to find the canonical Hilbert-Burch matrix for a given ideal $I$, i.e. the corresponding matrix $A \in \AA{\ini(I)}$. 
\subsection{Example 1}
Let $I_0$ be the following ideal:
\[I_0 = (x^3, x^2y^5, xy^7, y^{11}).\] 
So we have: $m_0 =  0$, $m_1 = 5 $, $m_2 = 7 $, $m_3 = 11$ and $d_1 = 5,$ $d_2 = 2$, $d_3 = 4$.
The sets of "special" indices are: $ \mathcal{I}  = \{1,3\} $ and $ \mathcal{J}  =  \{1,2,3\}$.
The matrix that bounds the degrees of the entries of a matrix $A \in \AA{I_0}$ is
\[\left( \begin{array}{rrr}
  4 & 4 & 4\p\\
  1 & 1 & 1\p\\
  0 & 1 & 3\p\\
 -3 & -2 & \p1\p
\end{array}\right).\]
Note that the Hilbert function of $R/I_0$ is $h_{R/I_0} = (1, 2, 3, 3, 3, 3, 3, 2, 1, 1, 1, 0).$
Now let $A$ and $B$ be two matrices in $\AA{I_0}$. The matrix $X+A$ will be
\[X+A = \left( \begin{array}{rrr}
  y^5 + a_{1,1}  & a_{1,2} & a_{1,3}\\
  -x + a_{2,1} & y^2 + a_{2,2} & a_{2,3}\\
  a_{3,1} & -x + a_{3,2} & y^4 + a_{3,3}\\
 0 & 0 & -x+a_{4,3}
\end{array}\right).\]
The matrix $X+B$ will have a similar form. Using the same notations as in the proof  we can write:
\[\begin{array}{rclcrcl}
 f_0 &= &g_0&\phantom{thismuchspace}& f_1 &= &g_1\\
 f_2 &=& g_2 + R_{1,2}~g_1&\phantom{thismuchspace}&f_3 &=& g_3 + R_{1,3}~g_1 + R_{2,3}~g_2
\end{array}\]  
The transition matrix $R$ will have actually two blocks (even if $q = \sharp\mathcal{I}=2$). This is because the first block is degenerated ($i_0=i_1=1$). The first column is not considered  part of any block.
\[ R = \left(
\begin{array}{l|ll|l}
1&0&0&0\\
0&1&R_{1,2}&R_{1,3}\\
0&0&1&R_{2,3}\\
0\phantom{_{2,3}}&0\phantom{_{2,3}}&0&1
\end{array}
\right).
\]  
It is easy to see that,  as the columns of $X+A$ are syzygies for $(f_0, f_1, f_2, f_3)$, the columns of $R(X+A)$ will be syzygies   for $(g_0, g_1, g_2, g_3)$. From these we will subtract the necessary multiples of the columns of $B$ in order to obtain syzygies with entries in $K[y]$:
\[  {\scriptsize E_1 = \left(
\begin{array}{r}
 y^5 + a_{1,1} \\ 
  -x + a_{2,1} + a_{3,1}R_{1,2}\\ 
  a_{3,1}                                      \\ 
 0\phantom{_{3,1}}                                                    
\end{array}
\right) - 
\left(
\begin{array}{r}
 y^5 + b_{1,1} \\ 
  -x + b_{2,1} \\ 
  b_{3,1}                                      \\ 
 0\phantom{_{3,1}}                                                    
\end{array}
\right),
}\] 

\[{\scriptsize  E_2 = \left(
\begin{array}{r}
 a_{1,2}                                                     \\  
  y^2 + a_{2,2} + (-x +a_{3,2})R_{1,2}   \\   
 -x + a_{3,2}                                                 \\  
  0\phantom{_{3,1}}                                                                    

\end{array}
\right) - 
\left(
\begin{array}{r}
 b_{1,2}                                                     \\  
  y^2 + b_{2,2}  \\   
 -x + b_{3,2}                                                 \\  
  0\phantom{_{3,1}}                                                                    

\end{array}
\right) -
\left(
\begin{array}{r}
( y^5 + b_{1,1})R_{1,2} \\ 
(  -x + b_{2,1} )R_{1,2}\\ 
  b_{3,1}\phantom{)}R_{1,2}                                      \\ 
 0\phantom{)R_{2,3}}                                                    

\end{array}
\right),
}\]  

  \[{\scriptsize E_3 = \left(
\begin{array}{r}
a_{1,3}\\
 a_{2,3} + (y^4 +a_{3,3})R_{1,2} + (-x + a_{4,3})R_{1,3}\\
y^4 + a_{3,3} + (-x + a_{4,3})R_{2,3}\\
-x+a_{4,3}
\end{array}
  \right)-
  \left(
\begin{array}{r}
 (y^5 + b_{1,1})R_{1,3} \\ 
 ( -x + b_{2,1})R_{1,3} \\
  b_{3,1} \phantom{)} R_{1,3}                                     \\ 
 0 \phantom{_{3,1}}\phantom{)R_{2,3}}                                                   
\end{array}
\right)-
  }\]
  \[{\scriptsize -\left(
\begin{array}{r}
 b_{1,2} \phantom{)}R_{2,3}                                                    \\  
 ( y^2 + b_{2,2})R_{2,3}  \\   
( -x + b_{3,2})R_{2,3}                                                 \\  
  0\phantom{_{3,1}}\phantom{)R_{2,3}}                                                                    
\end{array}
  \right)-
\left(
\begin{array}{r}
b_{1,3}\\
 b_{2,3} \\
y^4 + b_{3,3} \\
-x+b_{4,3}
\end{array}
  \right).
  }\]
  
  From $E_1$ we get that
  $E_{2,1}= a_{2,1} + a_{3,1}R_{1,2} - b_{1,2} = 0,$
  but we cannot draw any conclusion from here, as $a_{3,1}$ may also be 0.
  
  From the first entry of $E_2$ we have
  \[E_{1,2}=a_{1,2} - b_{1,2} - ( y^5 + b_{1,1})R_{1,2} = 0.\]
  As $\deg(a_{1,2}) \le 4$ and $\deg(b_{1,2}) \le 4$ we obtain that $R_{1,2} = 0$. We set $R_{1,2} = 0$ in $E_3$ and we get
  \[
  \begin{array}{rcl}
E_{1,3}=  a_{1,3} -b_{1,3} - (y^5 + b_{1,1})R_{1,3} -b_{1,2} R_{2,3}  &=&0,\\
  \rule{0pt}{3ex} 
E_{2,3}=  a_{2,3}  +  a_{4,3}R_{1,3} -b_{2,1}R_{1,3} - ( y^2 + b_{2,2})R_{2,3} - b_{2,3} &=& 0.
  \end{array}
  \]
  From $E_{1,3}$, as all the $a$'s and $b$'s  have degree less then 4 we get that if $R_{1,3} \neq 0$ then
  \[\deg(R_{1,3}) < \deg(R_{2,3}).\]
   From the second equation, as this time all the $a$'s and $b$'s  have degree less then 1 we get that if $R_{2,3} \neq 0$ then
   \[\deg(R_{2,3}) < \deg(R_{1,3}).\]
   This means that we actually  must have $R_{1,3} = R_{2,3} = 0$.

  \subsection{Example 2}
In the previous example, we did not have to change the equations. Also, as all indices were \lq\lq special\rq\rq~in that case, we obtained directly strict inequalities. In the next example we will see all the possible types of situations that may arise.
Let $I_0$ be the following ideal:
\[\begin{array}{ccl}
I_0 &= &(x^{12},  x^{11}y^3, x^{10}y^4, x^9y^5, x^8y^{10}, x^7y^{11},x^6y^{12},\\
\rule{0pt}{3ex} && \phantom{(} x^5y^{14}, x^4y^{15}, x^3y^{16}, x^2y^{19}, xy^{20}, y^{21}).
\end{array}\]
So $t = 12$, the $m$'s and the $d$'s are:
\[\begin{array}{ccl}
m&=&(0,3,4,5,10,11,12,14,15,16,19,20,21),\\
\rule{0pt}{3ex} d&=&(3,1,1,5,1,1,2,1,1,3,1,1).
\end{array}
\]
The sets of "special" indices are:
\[\begin{array}{ccl}
 \mathcal{I} & = &\{1,4,10\}, \\
 \mathcal{J} & = & \{1,4,7,10\}.
 \end{array}\]
 
We denote by  $h = (h_i)_{i\ge0}$ the $h$-vector of $R/I_0$. So  $h_i$ is actually the value of the \HF~ of $R/I_0$ in $i$.  We have 
\[ h = ( 1, 2, 3, 4, 5, 6, 7, 8, 9, 10, 11, 12, 12, 12, 9, 9, 9, 9, 6, 3, 3) \]
In the picture below it is easy to notice that the number of entries below the diagonal of degree one is $h_{12} =12$. Also, notice that the three blocks of zeros have sizes $h_{15}(h_{13}-h_{14}) =9(12-9) =27$,
$h_{19}(h_{17}-h_{18}) =3(9-6) =9$ and $h_{20}(h_{18}-h_{19}) =3(6-3) =9$.
 One can also easily check that the number of parameters, namely 195,
   is equal to $1+\sum_{i\ge0} h_i(h_{i-1}-h_{i-2}+1)$.
We will now look at a general matrix $A \in \AA{I_0}$. This will be an $13 \times 12$ matrix, with entries polynomials in $K[y]$. 
In order to emphasize the maximal possible degree of each $a_{i,j} \neq 0$ we  denote as in Subsection 3.2:
\[ a_{i,j} = \bigg\{ 
\begin{array}{ll}
 {\scriptstyle\bullet}~y^{m_{i,j}} & ,\textup{if}~ m_{i,j} > 0, \\
 c                                                     & ,\textup{if}~ m_{i,j} = 0.
  \end{array} \]
With this notation the matrix $A$ is:
\[
 \left(
\begin{array}{rrr|rrr|rrr|rrr}
 {\scriptstyle\bullet}~y^2 &  {\scriptstyle\bullet}~y^2 &  {\scriptstyle\bullet}~y^2 &  {\scriptstyle\bullet}~y^2 &  {\scriptstyle\bullet}~y^2 &  {\scriptstyle\bullet}~y^2 &  {\scriptstyle\bullet}~y^2 &  {\scriptstyle\bullet}~y^2 &  {\scriptstyle\bullet}~y^2 &  {\scriptstyle\bullet}~y^2 &  {\scriptstyle\bullet}~y^2 &  {\scriptstyle\bullet}~y^2\\
 \hline
  {\scriptstyle\bullet}~y\pp &c\pp & c\pp & c\pp & c\pp & c\pp & c\pp & c\pp & c\pp & c\pp & c\pp & c\pp\\
  {\scriptstyle\bullet}~y\pp & c\pp & c\pp  & c\pp & c\pp & c\pp & c\pp & c\pp & c\pp & c\pp & c\pp & c\pp\\
 {\scriptstyle\bullet}~y\pp & c\pp & c\pp &  {\scriptstyle\bullet}~y^4 & {\scriptstyle\bullet}~y^4 & {\scriptstyle\bullet}~y^4 & {\scriptstyle\bullet}~y^4 & {\scriptstyle\bullet}~y^4 & {\scriptstyle\bullet}~y^4 & {\scriptstyle\bullet}~y^4 & {\scriptstyle\bullet}~y^4 & {\scriptstyle\bullet}~y^4\\
  \hline
 0\pp & 0\pp & 0\pp &  {\scriptstyle\bullet}~y\pp & c\pp  & c\pp & c\pp & c\pp & c\pp & c\pp & c\pp & c\pp\\
 0\pp & 0\pp & 0\pp & {\scriptstyle\bullet}~y\pp &  c\pp & c\pp & c\pp & c\pp & c\pp & c\pp & c\pp & c\pp\\
 0\pp & 0\pp & 0\pp & {\scriptstyle\bullet}~y\pp & c\pp & c\pp &  {\scriptstyle\bullet}~y\pp & {\scriptstyle\bullet}~y\pp & {\scriptstyle\bullet}~y\pp & {\scriptstyle\bullet}~y\pp & {\scriptstyle\bullet}~y\pp & {\scriptstyle\bullet}~y\pp\\
  \hline
 0\pp & 0\pp & 0\pp & c\pp & c\pp & c\pp & {\scriptstyle\bullet}~y\pp & c\pp & c\pp & c\pp & c\pp & c\pp\\
 0\pp &0\pp & 0\pp & c\pp & c\pp & c\pp & {\scriptstyle\bullet}~y\pp & c\pp & c\pp & c\pp & c\pp & c\pp\\
 0\pp &0\pp &0\pp & c\pp & c\pp & c\pp & {\scriptstyle\bullet}~y\pp & c\pp & c\pp  &  {\scriptstyle\bullet}~y^2 & {\scriptstyle\bullet}~y^2 & {\scriptstyle\bullet}~y^2\\
  \hline
 0\pp &0\pp &0\pp & 0\pp&0\pp &0\pp & 0\pp & 0\pp &\pp 0\pp &  {\scriptstyle\bullet}~y\pp & c\pp & c\pp\\
 0\pp &0\pp &0\pp & 0\pp&0\pp &0\pp & 0\pp & 0\pp &\pp 0\pp &  {\scriptstyle\bullet}~y\pp & c\pp & c\pp\\
 0\pp &0\pp &0\pp & 0\pp&0\pp &0\pp & 0\pp & 0\pp &\pp 0\pp &  {\scriptstyle\bullet}~y\pp & c\pp & c\pp\\
  \end{array} \right). \]

Let $B \in \AA{I_0}$ be another matrix as in the proof of the injectivity. Suppose that they both parametrize the same ideal. The transition matrix $R$ from $X+A$ to $X+B$ is:
\[
\begin{array}{|l|lll|llllll|lll|}
&i_1&&&i_2&&&&&&i_3&&\\
\hline
1 & 0 & 0 & 0 & 0 & 0 & 0 & 0 & 0 & 0 & 0 & 0 & 0 \\
0 & 1 & 0 & 0 & R_{1,4} & R_{1,5} & R_{1,6} & R_{1,7} & R_{1,8} & R_{1,9} & R_{1,10} & R_{1,11} & R_{1,12} \\
0 & 0 & 1 & 0 & R_{2,4} & R_{2,5} & R_{2,6} & R_{2,7} & R_{2,8} & R_{2,9} & R_{2,10} & R_{2,11} & R_{2,12} \\
0 & 0 & 0 & 1 & R_{3,4} & R_{3,5} & R_{3,6} & R_{3,7} & R_{3,8} & R_{3,9} & R_{3,10} & R_{3,11} & R_{3,12} \\
0 & 0 & 0 & 0 & 1 & 0 & 0 & R_{4,7} & R_{4,8} & R_{4,9} & R_{4,10} & R_{4,11} & R_{4,12} \\
0 & 0 & 0 & 0 & 0 & 1 & 0 & R_{5,7} & R_{5,8} & R_{5,9} & R_{5,10} & R_{5,11} & R_{5,12} \\
0 & 0 & 0 & 0 & 0 & 0 & 1 & R_{6,7} & R_{6,8} & R_{6,9} & R_{6,10} & R_{6,11} & R_{6,12} \\
0 & 0 & 0 & 0 & 0 & 0 & 0 & 1 & 0 & 0 & R_{7,10} & R_{7,11} & R_{7,12} \\
0 & 0 & 0 & 0 & 0 & 0 & 0 & 0 & 1 & 0 & R_{8,10} & R_{8,11} & R_{8,12} \\
0 & 0 & 0 & 0 & 0 & 0 & 0 & 0 & 0 & 1 & R_{9,10} & R_{9,11} & R_{9,12} \\
0 & 0 & 0 & 0 & 0 & 0 & 0 & 0 & 0 & 0 & 1 & 0 & 0 \\
0 & 0 & 0 & 0 & 0 & 0 & 0 & 0 & 0 & 0 & 0 & 1 & 0 \\
0 & 0 & 0 & 0 & 0 & 0 & 0 & 0 & 0 & 0 & 0 & 0 & 1 \\
\hline
\end{array}
\]
Notice that again the first block is degenerated, so we only have 3 blocks. As $i_2=j_2$ and $j_1=1$ we have that there is nothing to prove for the second block either.   
It is clear that when multiplying with the first three columns of $X+A$, the $R_{k,l}$'s do not appear. Also when multiplying columns 4 to 9, the last three columns of the matrix $R$ do not play any role. In this example we will look just at the $R_{i,j}$'s with $j \le 9$, namely the third block. The order that we introduced in the proof is in this case the following:
\[
\begin{array}{ccccccccccccccc}
&R_{1,4}&<&R_{1,5}&<&R_{1,6}&<&R_{2,4}&<&R_{2,5}&<&R_{2,6}&<&R_{3,4}&<\\
<&R_{3,5}&<&R_{3,6}&<&R_{1,7}&<&R_{1,8}&<&R_{1,9}&<&R_{2,7}&<&R_{2,8}&<\\
<&R_{2,9}&<&R_{3,7}&<&R_{3,8}&<&R_{3,9}&< &\ldots&<& R_{6,9}&&&\\
\end{array}
\]
First, notice that the smallest element  has an equation of type 1:
\[E_{1,4} = {\scriptstyle\bullet}~y^2  - \bold{y^3 R_{1,4}} - {\scriptstyle\bullet}~y^2 R_{2,4} - {\scriptstyle\bullet}~y^2 R_{3,4} = 0.\] 
This means it cannot have maximal degree among the $R_{k,l}$'s.

For the remaining part of this example we will focus on the type 4 equations. These are:
\begin{eqnarray}
c + c R_{1,4} +  c R_{1,5} +  \underline{y^2 R_{1,6}} +  {\scriptstyle\bullet}~y R_{1,7} +  {\scriptstyle\bullet}~y R_{1,8} +  {\scriptstyle\bullet}~y R_{1,9} - &&\phantom{~=0 }\nonumber\\ 
 \rule{0pt}{2ex} {}- \bold{y R_{2,7}} - c R_{3,7} - c R_{4,7} - c R_{5,7} - c R_{6,7}&= &0, \label{r27}\\
\rule{0pt}{3ex} c + c R_{2,4} +  c R_{2,5} +  \underline{y^2 R_{2,6}} +  {\scriptstyle\bullet}~y R_{2,7} +  {\scriptstyle\bullet}~y R_{2,8} +  {\scriptstyle\bullet}~y R_{2,9}- &&\phantom{~=0}\nonumber\\
\rule{0pt}{2ex}- {\scriptstyle\bullet}~y R_{1,7} - \bold{y R_{3,7} }- c R_{4,7} - c R_{5,7} - c R_{6,7}&= &0.\label{r37}
\end{eqnarray}
By the proof, we want to get inequalities on the degrees of $R_{2,7}$, respectively $R_{3,7}$ from the equations (\ref{r27})  $E_{2,7}=0$ and (\ref{r37}) $E_{3,7}=0$. But the degree of their coefficients is not maximal among the other coefficients. To correct this we will use the  equations $E_{1,5}=0$, $E_{1,6}=0$ and $E_{2,6}=0$:
\begin{eqnarray}
{\scriptstyle\bullet}~y^2  - \bold{y^3 R_{1,5}} - {\scriptstyle\bullet}~y^2 R_{2,5} - {\scriptstyle\bullet}~y^2 R_{3,5} &= &0,\label{r15}\\
{\scriptstyle\bullet}~y^2  - \bold{y^3 R_{1,6}} - {\scriptstyle\bullet}~y^2 R_{2,6} - {\scriptstyle\bullet}~y^2 R_{3,6} &= &0,\label{r16}\\
 + c R_{1,4} +  {\scriptstyle\bullet}~y R_{1,5} +  {\scriptstyle\bullet}~y R_{1,6} +  c R_{1,7} +  c R_{1,8} +  c R_{1,9}-&& \phantom{~=0}\nonumber\\
 - \bold{y R_{2,6}} -c R_{3,6} &= &0.\label{r26}
 \end{eqnarray}
We modify $E_{2,7}$ in the following way:
$E_{2,7}:= yE_{2,7} + E_{1,6}.$
We obtain:
\begin{eqnarray}
{\scriptstyle\bullet}~y^2+ {\scriptstyle\bullet}~y R_{1,4} +  {\scriptstyle\bullet}~y R_{1,5} +  {\scriptstyle\bullet}~y^2 R_{1,6} +  {\scriptstyle\bullet}~y^2 R_{1,7} +  {\scriptstyle\bullet}~y^2 R_{1,8} +   &&\phantom{~=0 }\nonumber\\ 
 \rule{0pt}{2ex}{\scriptstyle\bullet}~y^2 R_{1,9}  - \mathbf{y^2 R_{2,7}} - {\scriptstyle\bullet}~yR_{3,7} - {\scriptstyle\bullet}~y R_{4,7} - {\scriptstyle\bullet}~y R_{5,7} - {\scriptstyle\bullet}~y R_{6,7}- && \phantom{~=0 }\label{r271}\\ 
\rule{0pt}{2ex}-{\scriptstyle\bullet}~y^2 R_{2,6} - {\scriptstyle\bullet}~y^2 R_{3,6} &= &0.\nonumber 
\end{eqnarray}
So, if $\deg(R_{2,7})>0$ is maximal, it has to be equal to the degree of one of the following:
\[ R_{2,6}, R_{3,6} \textup{~or~} R_{1,j}, \textup{~with~} j\in \{6,7,8,9\}.  \]
It is easy to notice that all these are smaller than $R_{2,7}$ in the defined order.

The equation for $R_{3,7}$ will be modified three times, namely:
\begin{eqnarray*}
E_{3,7}&:= &E_{3,7}+yE_{2,6}.\\
E_{3,7}&:= &yE_{3,7}+E_{1,5}.\\
E_{3,7}&:= &E_{3,7}+E_{1,6}.
\end{eqnarray*}
Thus we obtain a new equation from which we deduce that, if $\deg(R_{3,7})>0$ is maximal, then it is equal to the degree of one of the following:
\[R_{1,i},R_{2,j}, R_{3,k},\]
where $i \in\{4,\ldots,9\}$, $j\in \{5,\ldots,9\}$ and $k \in \{5,6\}$. So all of them are smaller than $ R_{3,7}$.
It is easy to notice that if some of the $R_{i,j}$ would be 0, this would only reduce the number of  cases we have to consider.

\subsection{Example 3}
Now we will give an example of how the proof of the surjectivity of $\psi$ works. We will start with an ideal $I \subset R$ with $\dim(R/\ini(I)) = 0$ and construct the corresponding  matrix of $\AA{\ini(I)}$.
 
 Let $I$ be the ideal generated by the following polynomials:
\begin{eqnarray*}
 f_0&= &x^3 - x^2y - 2xy^2 + 2y^3 - 2x^2 + xy + y^2 - x + 2y - 2,\\
 f_1&=&x^2y^2 - 2y^4 - x^3 + x^2y - 2y^3 + x^2 - 3xy + 4y^2 + 4x - y,\\
 f_2&=&xy^3 - y^4 - 2x^2y + 6xy^2 - 5y^3 + x^2 - xy + 2y^2 - 3x + 4y - 2,\\
f_3&=&y^5 + x^2y^2 - 2xy^3 + 2y^4 + 3xy^2 + 2y^3 - x^2 - 2xy - y^2 - x - 11y + 6.
\end{eqnarray*}
Its DRL initial ideal is $I_0 = \ini(I) = (x^3,x^2y^2,xy^3,y^5).$ So these polynomials are already a DRL \GB~ for $I$. So  We have  $t=3$, $m_0 =  0$, $m_1 = 2 $, $m_2 =  3,$ $m_3 = 5$ and $d_1 = 2,$ $d_2 = 1$, $d_3 = 2$. 
Notice  that in the support of $f_1$ there is a monomial divisible by a power of $x$ higher than or equal to $t$: $x^3$. So we will set $f_1$ to be $f_1 + f_0$.

The next step is to  compute the S-polynomials:
 {\setlength\arraycolsep{2pt}\begin{eqnarray*}
 S_{1,0} &= &y^2f_0 - xf_1,\\
 S_{2,1}&=& y\phantom{^2}f_1 - x f_2,\\
 S_{3,2}&=&y^2f_2 - x f_3.
\end{eqnarray*}}
After performing the division algorithm we obtain:
\[ \begin{array}{rcrcrcrcr}
 S_{1,0} &= &(-1)f_0 &+ &y f_1  &+ &f_2 &+ &0f_3,\\
 S_{2,1}&=& (-2y + 1)f_0 &+& f_1 &+&(-y + 1)f_2 &+& f_3,\\
 S_{3,2}&=&(y^2 - 1)f_0 &+& 3f_1 &+& f_2 &+& (y + 1)f_3.
\end{array}\]
By Schreyer's theorem, these syzygies generate the syzygy module of $I$. So we have obtained the following Hilbert-Burch matrix:
\[\left(\begin{array}{rrr}
 y^2 - 1     &        -2y + 1   &          y^2 - 1\\
              -x + y     &          y + 1      &             3\\
                   1        &  -x - y + 1    &         y^2 + 1\\
                   0          &         1   &       -x + y + 1
 \end{array}\right).\]
 Notice that, as expected, it is a matrix of the form $X+A$. The matrix that bounds the degrees of the entries of the matrices in $\AA{I_0}$ is
 \[\left(\begin{array}{ccc}
 1 & 1 & 1\\
  1 & 0 & 0\\
  1 & 0 & 1\\
  0 & 0 & 1
 \end{array}\right),\textrm{~and ~} A = 
 \left(\begin{array}{rrr}
  - 1     &        -2y + 1   &          y^2 - 1\\
               + y     &            1      &             3\\
                   1        &  - y + 1    &           1\\
                   0          &         1   &       - y + 1
 \end{array}\right),\]
so  $A\notin \AA{I_0}$. We will need to do some reduction moves. We will start looking at the upper left $2 \times 1$ corner of $A$. There the bounds are respected. Now we will look at the up upper left $3 \times 2$ corner. We start looking at the last row of this block, from right to left. Then, if everything is fine there, we look at the last column from top to bottom. In this example, the first entry that we look at, ($a_{3,2}$) has degree higher than the bound. So we apply the  reduction move Red$_{3,2}$ to $X+A$:
\begin{itemize}
\item[-] Subtract from  row 3, row 2 multiplied by (-1) . 

\item[-] As you can see, in position $(3,1)$  there is an entry which contains $x$. So to cancel this $x$ we subtract from column 1:  column 2 multiplied by (1). We obtain: 
 \end{itemize}
\[{\scriptsize\left(
\begin{array}{rrr}
  y^2 - 1         &         -2y + 1          &        y^2 - 1\\
                   -x + y       &             y + 1          &              3\\
               -x + y + 1       &            -x + 2    &              y^2 + 4\\
                        0               &         1        &       -x + y + 1
  \end{array}
  \right)
  ,\mathrm{~then~}
  \left(
  \begin{array}{rrr}
  y^2 + 2y - 2             &     -2y + 1             &     y^2 - 1\\
                   -x - 1         &           y + 1          &              3\\
                    y - 1           &        -x + 2       &           y^2 + 4\\
                       -1              &          1           &    -x + y + 1
  \end{array}
  \right).
  }\]
Now we start over with  checking the matrix. This time we find an entry with degree higher than the bound in position $(1,3)$. We apply Red$_{1,3}$:
\begin{itemize}
\item[-] Subtract from column 3, column 1 multiplied by 1.
\item[-] Subtract from row 2, row 4 multiplied by 1. We obtain:  
\end{itemize}
\[{\scriptsize\left(\begin{array}{rrr}
 y^2 + 2y - 2            &      -2y + 1            &      -2y + 1\\
                   -x - 1       &             y + 1       &             x + 4\\
                    y - 1       &            -x + 2      &        y^2 - y + 5\\
                       -1          &              1          &     -x + y + 2
\end{array}\right),\mathrm{~then~}
\left(\begin{array}{rrr}
  y^2 + 2y - 2              &    -2y + 1          &        -2y + 1\\
                   -x - 2          &          y + 2        &            y + 6\\
                    y - 1          &         -x + 2       &       y^2 - y + 5\\
                       -1            &            1          &     -x + y + 2
\end{array}\right).}\]

We check again the matrix in the same order and find that the entry $(2,3)$ does not respect the upper bound. Notice that this entry was of lower degree when we started. So we apply now Red$_{2,3}$:
\begin{itemize}
\item[-] Subtract from column 3, column 2 multiplied by (-1).

\item[-] Subtract from row 3, row 4 multiplied by (-1). We obtain:  
\end{itemize}
\[{\scriptsize\left(\begin{array}{rrr}
 y^2 + 2y - 2           &       -2y + 1              &          0\\
                   -x - 2        &            y + 2         &               4\\
                    y - 1         &          -x + 2       &   y^2 + x - y + 3\\
                       -1           &             1           &    -x + y + 1
\end{array}\right),\mathrm{~then~}
\left(\begin{array}{rrr}
 y^2 + 2y - 2          &        -2y + 1          &              0\\
                   -x - 2     &               y + 2      &                  4\\
                    y - 2       &            -x + 3      &            y^2 + 4\\
                       -1        &                1      &         -x + y + 1
\end{array}\right).}\]

And now, after checking again, we find that this time the matrix respects all the upper bounds. So the matrix $A' \in \AA{I_0}$ that corresponds to the ideal $I$ is:
\[\left(\begin{array}{rrr}
  2y - 2          &        -2y + 1          &              0\\
                    - 2     &               2      &                  4\\
                    y - 2       &             3      &             4\\
                       -1        &                1      &          y + 1
\end{array}\right).\]
The generators of $I$ given by the signed minors of the Hilbert-Burch matrix have changed. They are now:
{\setlength\arraycolsep{2pt}\begin{eqnarray*}
f_0'&=&x^3 - x^2y - 2xy^2 + 2y^3 - 2x^2 + xy + y^2 - x + 2y - 2,\\
f_1'&=&x^2y^2 - xy^3 - y^4 + 2x^2y - 8xy^2 + 5y^3 - 2x^2 - xy + 3y^2 + 6x - 3y,\\
f_2'&=&xy^3 - y^4 - 2x^2y + 6xy^2 - 5y^3 + x^2 - xy + 2y^2 - 3x + 4y - 2,\\
f_3'&=&y^5 - 2xy^3 + 4y^4 + 5xy^2 + 2y^3 - 6y^2 - 4x - 12y + 8.
\end{eqnarray*}}

\section{Ideals in $K[x,y,z]$}

In this section we will  consider ideals of the polynomial ring in three variables. Given any monomial ideal $J_0$ of $K[x,y,z]$ and considering the affine variety of the homogeneous ideals that have $I_0$ as initial ideal for a certain term order $\tau$, we do not obtain in general an affine space (see \cite{iarro2} and \cite{concavalla} for examples). We will prove that if we take $J_0 = I_0K[x,y,z]$, with $I_0 \in K[x,y]$ a lex-segment ideal, and choose the degree reverse-lexicographic order induced by $x>y>z$, then $V_{hom}(J_0)$ is again an affine space. We also give a parametrization for this space, which comes from the parametrization of \I{I_0}.

First we will introduce some notation and recall some results that we will use.

\subsection{Notation and useful results}
We will denote by $S:= K[x,y,z]$ and, as before, $R = K[x,y]$. We present now some known  results on homogenization and dehomogenization. Most of them can be found in a more general form in \cite{krobb}.

Let $f \in R$ and $F \in S$ be two polynomials. We will write  $f = c_1t_1 + \ldots + c_st_s$, with $c_i \in K$ and $t_i$ monomials in $x$ and $y$. We denote $u_i := \deg(t_i)$ and set $\mu := \max\{u_i\}$.
\begin{definition}\label{defhom}
\begin{itemize}
\item[a)] The homogenization of $f$ in $S$ is the following polynomial $$f^{hom}:= \sum_{i=1}^s c_it_i z^{\mu - u_i}$$.\\
The dehomogenization of $F$ with respect to $z$ is
$F^{deh}:= F(x,y,1).$
\item[b)] Let $I \subset R$ and $J \subset S$ be two ideals. The homogenization of $I$ in $S$ is the ideal
\[I^{hom}:= (f^{hom} ~:~ f \in I )~ \subseteq~ S.\]
The dehomogenization of $J$ with respect to the variable $z$ is
\[J^{deh}:= (F^{deh} ~:~ F \in J )~ \subseteq ~R.\]
\end{itemize}
\end{definition}
\noindent Here are some remarks on the behavior of polynomials and ideals under the two operations defined above.
\begin{proposition}\label{homid}
Consider $f,g \in R$ and $F,G \in S$ and let $I \subset R$ and $J \subset S$ be two ideals. 
\begin{itemize}
\item[\textup{1.}] $(f^{hom})^{deh} = f$.
\item[\textup{2.}] If $s = \max \{ i ~:~ z^i ~\textup{divides}~ F\}$ then: $z^s(F^{deh})^{hom} = F$.
\item[\textup{3.}] $(I^{hom})^{deh} = I$.
\item[\textup{4.}] $J \subseteq (J^{deh})^{hom} = J :_{S} (z)^{\infty}$.
\item[\textup{5.}] If $I \neq R$ then $z$ is a non-zero divisor of $S/I^{hom}$.
\end{itemize}
\end{proposition}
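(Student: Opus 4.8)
The plan is to deduce all five assertions from the explicit formulas for the two operations, the key structural remark being that dehomogenization is nothing but the surjective $K$-algebra homomorphism $\varepsilon\colon S\to R$ determined by $x\mapsto x$, $y\mapsto y$, $z\mapsto 1$. In particular $\varepsilon$ is additive and multiplicative and $J^{deh}=\varepsilon(J)$ is an ideal of $R$; these facts are used tacitly below. In part 4 we assume $J$ homogeneous (the relevant case), and in both parts 4 and 5 we reduce every containment to homogeneous elements, which is legitimate because all the ideals in sight --- $J$, $(J^{deh})^{hom}$, $J:_S(z)^\infty$ and $I^{hom}$ --- are homogeneous. Observe also that a \emph{homogeneous} polynomial $F$ satisfies $F^{deh}=0$ only when $F=0$, since in a homogeneous polynomial each monomial in $x,y$ occurs multiplied by exactly one power of $z$; hence no degenerate case arises when applying part 2.

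Parts 1 and 2 are immediate computations with the defining formulas. For 1, setting $z=1$ in $f^{hom}=\sum_i c_it_iz^{\mu-u_i}$ returns $\sum_i c_it_i=f$. For 2, write the homogeneous $F$ of degree $d$ as $F=\sum_i c_it_iz^{a_i}$, with the monomials $t_i$ in $x,y$ pairwise distinct and $u_i+a_i=d$; then $s=\min_i a_i$, the polynomial $F^{deh}=\sum_i c_it_i$ has top degree $d-s$, hence $(F^{deh})^{hom}=\sum_i c_it_iz^{(d-s)-u_i}$ and $z^s(F^{deh})^{hom}=\sum_i c_it_iz^{d-u_i}=\sum_i c_it_iz^{a_i}=F$.

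For part 3, the inclusion $I\subseteq(I^{hom})^{deh}$ follows from part 1 since $f=(f^{hom})^{deh}$ with $f^{hom}\in I^{hom}$; conversely any element of $(I^{hom})^{deh}=\varepsilon(I^{hom})$ has the form $\varepsilon\bigl(\sum_j Q_jf_j^{hom}\bigr)=\sum_j\varepsilon(Q_j)f_j\in I$. For part 4, take first a homogeneous $F\in J$: then $F^{deh}\in J^{deh}$, so $(F^{deh})^{hom}$ is one of the generators of $(J^{deh})^{hom}$, and part 2 gives $F=z^s(F^{deh})^{hom}\in(J^{deh})^{hom}$; decomposing an arbitrary element of $J$ into homogeneous components yields $J\subseteq(J^{deh})^{hom}$. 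The same argument, applied to a homogeneous $F$ with $z^nF\in J$, gives $J:_S(z)^\infty\subseteq(J^{deh})^{hom}$. For the reverse inclusion one cannot pass to a generating set, since the homogenization of an ideal need not be generated by the homogenizations of generators; instead, as $(J^{deh})^{hom}$ is by definition generated by the polynomials $G^{hom}$ with $G\in J^{deh}$, it suffices to show each such $G^{hom}$ lies in $J:_S(z)^\infty$. Writing $G=\varepsilon(F)$ with $F=\sum_{d\le D}F_d\in J$ its homogeneous decomposition and letting $\operatorname{hom}_D$ denote homogenization in degree $D$ (an operation additive on polynomials of degree $\le D$, with $\operatorname{hom}_D(p)=z^{D-\deg(p^{hom})}p^{hom}$), part 2 gives $\operatorname{hom}_D(F_d^{deh})=z^{D-d}F_d$, whence $\operatorname{hom}_D(G)=\sum_d z^{D-d}F_d\in J$, i.e.\ $z^{D-\deg(G^{hom})}G^{hom}\in J$, as required.

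Part 5: suppose $zF\in I^{hom}$, and take $F$ homogeneous. Then $F^{deh}=\varepsilon(zF)\in\varepsilon(I^{hom})=(I^{hom})^{deh}=I$ by part 3, so $(F^{deh})^{hom}\in I^{hom}$, and part 2 gives $F=z^s(F^{deh})^{hom}\in I^{hom}$; hence $z$ is a nonzerodivisor on $S/I^{hom}$. The hypothesis $I\ne R$ is used only to ensure $S/I^{hom}\ne 0$ --- that is, $1\notin I^{hom}$, which holds because $(I^{hom})^{deh}=I\ne R$ --- so that the statement is not vacuous. The one point genuinely requiring care is the reverse inclusion in part 4; everything else is routine bookkeeping, and in fact all five statements are classical (cf.\ \cite{krobb}) and are recorded here only for completeness.
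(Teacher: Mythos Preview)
Your proof is correct and self-contained. The paper does not actually prove this proposition: it is stated as a collection of ``known results on homogenization and dehomogenization'' with a reference to \cite{krobb}, so there is no proof in the paper to compare against. Your restriction to homogeneous $J$ in part 4 is appropriate (and necessary --- the equality $(J^{deh})^{hom}=J:_S(z)^\infty$ can fail for inhomogeneous $J$), and this is the only case the paper uses. The one genuinely nontrivial step, the inclusion $(J^{deh})^{hom}\subseteq J:_S(z)^\infty$, is handled cleanly by your degree-$D$ homogenization trick.
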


On both $R$ and $S$ we will always consider  the degree reverse-lexicographic term order.  As this term order is degree compatible, from \cite{krobb}, Chapter 4.3 we can deduce the following. 
\begin{proposition} \label{inhom}
Let $f \in R$ a non-zero polynomial and  $I$ be an ideal of $R$. Let  $F \in S$ be a non-zero homogeneous polynomial and $J$ be a non-zero homogeneous ideal of $S$. Then
\begin{itemize}
\item[\textup{1.}] $\ini(f^{hom}) = \ini(f)$ and $\ini(F^{deh}) = (\ini(F))^{deh}$.
\item[\textup{2.}] If $\{f_1,\ldots, f_s\}$ is a \GB ~of $I$, then $\{f_1^{hom},\ldots,f_s^{hom}\}$ is a \GB~ of $I^{hom}$.
\item[\textup{3.}]If $\{F_1,\ldots, F_s\}$ is a homogeneous \GB ~of $J$, then $\{F_1^{deh},\ldots,F_s^{deh}\}$ is a \GB~ of $J^{deh}$.
\end{itemize}
\end{proposition}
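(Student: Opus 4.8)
The plan is to prove part 1 first, since it carries all the content, and then obtain parts 2 and 3 from it essentially formally using Proposition \ref{homid}. The one point that makes everything work is that the DRL order is degree compatible \emph{and} that the homogenizing variable $z$ is the smallest variable; I would stress this at the outset, because the analogous statement fails for non-degree-compatible orders, which is precisely why the whole paper is restricted to DRL.

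For $\ini(f^{hom})=\ini(f)$, I would argue that degree compatibility forces $\ini(f)$ to be one of the monomials of $f$ of maximal degree $\mu$; passing to $f^{hom}$ fixes every such monomial (it is multiplied by $z^{0}$), multiplies each monomial of degree $u<\mu$ by $z^{\mu-u}$ with $\mu-u>0$, and causes no cancellation, so that all monomials of $f^{hom}$ have degree $\mu$. Among monomials of equal degree the DRL order with $x>y>z$ prefers the one with the smaller exponent in $z$, hence $\ini(f^{hom})$ lies among the monomials with $z$-exponent $0$, i.e. among the degree-$\mu$ monomials of $f$, on which DRL in $x,y$ is unchanged; so $\ini(f^{hom})=\ini(f)$. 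For $\ini(F^{deh})=(\ini(F))^{deh}$ I would first note that, writing a monomial of $F$ as $x^{a}y^{b}z^{c}$ with $a+b+c=\deg F=:e$, the exponent $c=e-a-b$ is determined by the $x,y$-part, so dehomogenization is an injection on the monomials of $F$ and produces no cancellation. A short case analysis then shows it is order preserving: for $x^{a}y^{b}z^{c}$ and $x^{a'}y^{b'}z^{c'}$ of degree $e$, if $c\neq c'$ the comparison in $S$ is decided by the sign of $c-c'$ while the comparison of $x^{a}y^{b},x^{a'}y^{b'}$ in $R$ is decided by the sign of $(a+b)-(a'+b')=c'-c$, giving the same winner; if $c=c'$ then $a+b=a'+b'$ and both comparisons are decided by the sign of $b-b'$. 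An order-preserving bijection sends maximum to maximum, which gives $\ini(F^{deh})=(\ini(F))^{deh}$.

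For parts 2 and 3 I would use the standard fact that a subset of an ideal whose leading terms span the initial ideal is automatically a \GB~(and generates the ideal), so only membership and the span of leading terms must be checked. For part 2: $f_{i}^{hom}\in I^{hom}$ by definition; for nonzero $G\in I^{hom}$ one has $G^{deh}\neq 0$ (a nonzero homogeneous polynomial cannot dehomogenize to $0$) and $G^{deh}\in(I^{hom})^{deh}=I$ by Proposition \ref{homid}(3), hence $\ini(G)=z^{k}\ini(G^{deh})$ for the $z$-exponent $k$ of $\ini(G)$ by part 1, and $\ini(G^{deh})\in\ini(I)=\langle\ini(f_{1}),\ldots,\ini(f_{s})\rangle$, so $\ini(G)$ is divisible by some $\ini(f_{i})=\ini(f_{i}^{hom})$. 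For part 3: $F_{i}^{deh}\in J^{deh}$ by definition; for nonzero $g\in J^{deh}$ one has $g^{hom}\in(J^{deh})^{hom}=J:_{S}(z)^{\infty}$ by Proposition \ref{homid}(4), so $z^{N}g^{hom}\in J$ for some $N$, whence $z^{N}\ini(g)=\ini(z^{N}g^{hom})$ is divisible by some $\ini(F_{i})$; dehomogenizing this divisibility (dehomogenization is a ring homomorphism) and using that $\ini(g)$ is fixed by it gives that $\ini(g)$ is divisible by $(\ini(F_{i}))^{deh}=\ini(F_{i}^{deh})$.

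The main obstacle is entirely in part 1: the combinatorial verification that homogenization by the last variable preserves DRL leading terms, and in particular pinning down the precise way the variable order $x>y>z$ enters the monomial comparisons. Once part 1 is in place, parts 2 and 3 are a routine transport through the homogenization/dehomogenization dictionary of Proposition \ref{homid}, the only mild care needed being that the $z$-powers appearing when one passes back and forth are harmless because the leading term of an element of $R$ contains no $z$.
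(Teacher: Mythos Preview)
Your argument is correct. The paper itself does not give a proof of this proposition: it simply states that the result can be deduced from \cite{krobb}, Chapter~4.3, relying on the fact that the term order is degree compatible. What you have written is essentially the elementary argument behind that reference, spelled out for the specific situation $R=K[x,y]$, $S=K[x,y,z]$ with $z$ the smallest variable under DRL. Your emphasis on \emph{why} degree compatibility and the position of $z$ are crucial (the leading term of $f$ has maximal degree, and among equal-degree monomials DRL penalizes higher $z$-exponents) is exactly the point the paper is invoking implicitly when it restricts to DRL; this is worth keeping, since it explains the otherwise unmotivated hypothesis. Parts~2 and~3 are then, as you say, a routine transport through Proposition~\ref{homid}, and your handling of the saturation $J:_S(z)^\infty$ in part~3 is clean. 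There is nothing to correct; your write-up simply supplies what the paper outsources to \cite{krobb}.
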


Now we will define similar operations on matrices. In particular, the dehomogenization of a matrix $A$ with entries $S$ with respect to the variable $z$ will be just the dehomogenization of all its entries. We will denote this new matrix, with entries in $R$ by $A^{deh}$.

The homogenization of a matrix with entries in $R$ will not be defined this straight forward. We will define this only for the matrices that parametrize \I{I_0}.

Let $I_0 \subseteq R$ be a monomial ideal generated, as in the previous chapter, by $x^t,x^{t-1}y^{m_1},\ldots,y^{m_t}$. We recall from Section 2 its degree matrix, that is the $(t+1)\times t$ matrix $U(I_0)$ with entries:
\[u_{i,j}= m_j - m_{i-1} +i -j.\]
Now we can define the homogenization of a matrix. Notice that this will depend on the degree matrix associated to $I_0$.

\begin{definition}
Let $A \in \AA{I_0}$, with entries $a_{i,j}$. For every $i = 1,\ldots, t+1$ and $j = 1, \ldots, t$ we define:
\[ a_{i,j}^{\underline{hom}}:= z^{u_{i,j}- \deg(a_{i,j})}a_{i,j}^{hom},\] 
where $a_{i,j}^{hom}$ is the standard homogenization defined in \ref{defhom}. The homogenization of the matrix $A$ will be the matrix with entries $a_{i,j}^{\underline{hom}}$. We will denote this matrix by $A^{\underline{hom}}$. 
\end{definition}
\begin{remark} We could define the homogenization in the same way also for the matrix $X+A$. But as the entries of $X$ are either $0$ or of degree $u_{i,j}$ we would have
\[ (X+A)^{\underline{hom}} = X+(A^{\underline{hom}}). \]
\end{remark}
The matrices $A^{\underline{hom}}$ and $X + A^{\underline{hom}}$ are  homogeneous matrices in the sense of Definition 4.7.1. of \cite{krobb}. So their minors will be homogeneous polynomials in $S$. In particular, the ideal generated by the maximal minors of $X+A^{\underline{hom}}$ is a homogeneous ideal of $S$. 

For $i = 0, \ldots, t$  will denote by $f_i$  the determinant of the matrix obtained from $X + A$ by deleting the $(i+1)$th row  times $(-1)^{i+1}$, and by $F_i$ the determinant of the matrix obtained from $X + A^{\underline{hom}}$ by deleting the $(i+1)$th row times $(-1)^{i+1}$. It is easy to see that we have:
\[ F_i = (f_i)^{hom}. \]

We will end this section with a lemma that will turn out useful later.

\begin{lemma}\label{lemahomid}
Let $A \in \AA{I_0}$ be a matrix. With the above notations we have: 
\begin{itemize}
  \item[\textup{1.}]  $(I_t(X+A))^{hom} = I_t(X+A^{\underline{hom}}).$
  \item[\textup{2.}]  $I_t(X+A) = (I_t(X+A^{\underline{hom}}))^{deh}.$
\end{itemize}  
\end{lemma}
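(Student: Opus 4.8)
The plan is to deduce both statements from the general facts on homogenization collected in Proposition \ref{homid} and Proposition \ref{inhom}, together with the identity $F_i = (f_i)^{hom}$ and the fact, established in the proof of point 1 of Theorem \ref{main}, that $\{f_0,\ldots,f_t\}$ is a \GB\ of $I := I_t(X+A)$ with respect to the DRL term order. First I would record that $I_t(X+A)$ is generated by $f_0,\ldots,f_t$ and $I_t(X+A^{\underline{hom}})$ is generated by $F_0,\ldots,F_t$ (the latter because $X+A^{\underline{hom}}$ is a homogeneous matrix, so its signed maximal minors are exactly the $F_i$, by the cofactor expansion along the deleted row). This reduces everything to comparing the ideals generated by $\{f_i\}$ and $\{F_i = f_i^{hom}\}$.

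For statement 1, the key point is that $\{f_0,\ldots,f_t\}$ is a \GB\ of $I$. By Proposition \ref{inhom}(2), it follows that $\{f_0^{hom},\ldots,f_t^{hom}\} = \{F_0,\ldots,F_t\}$ is a \GB\ of $I^{hom}$; in particular these polynomials generate $I^{hom}$. Hence
\[ (I_t(X+A))^{hom} = I^{hom} = (F_0,\ldots,F_t) = I_t(X+A^{\underline{hom}}), \]
which is exactly 1. Here the only thing that needs a small check is that the homogenizations of the $f_i$ coming from the matrix construction agree with the standard homogenization $f_i^{hom}$ — but this is the remark $F_i = (f_i)^{hom}$ stated just before the lemma, so it may be invoked.

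For statement 2, I would apply dehomogenization with respect to $z$ to the identity just proved. By Proposition \ref{homid}(3) we have $((I_t(X+A))^{hom})^{deh} = I_t(X+A)$ since $I_t(X+A) \subseteq R$ is an ideal of $R$ (here one uses that $I_t(X+A) \ne R$, which holds because $\ini(I_t(X+A)) = I_0 \ne R$ by point 1 of the main theorem; if $I_0 = R$ the statement is vacuous). Combining with part 1,
\[ I_t(X+A) = \big((I_t(X+A))^{hom}\big)^{deh} = \big(I_t(X+A^{\underline{hom}})\big)^{deh}, \]
which is 2. Alternatively, and perhaps more in the spirit of the surrounding machinery, one can argue directly: since $\{F_0,\ldots,F_t\}$ is a homogeneous \GB\ of $I_t(X+A^{\underline{hom}})$, Proposition \ref{inhom}(3) gives that $\{F_0^{deh},\ldots,F_t^{deh}\}$ is a \GB\ of $\big(I_t(X+A^{\underline{hom}})\big)^{deh}$, and $F_i^{deh} = (f_i^{hom})^{deh} = f_i$ by Proposition \ref{homid}(1); hence the dehomogenized ideal is generated by the $f_i$, i.e.\ equals $I_t(X+A)$.

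The main obstacle, such as it is, is not conceptual but bookkeeping: one must be careful that the matrix-homogenization $a_{i,j}^{\underline{hom}} = z^{u_{i,j}-\deg(a_{i,j})} a_{i,j}^{hom}$ produces, upon expanding the maximal minors, exactly the polynomial $f_i^{hom}$ rather than some $z$-multiple of it — i.e.\ that no spurious power of $z$ is picked up. This is the content of the preparatory remark $F_i = (f_i)^{hom}$, which relies on the degree bounds \eqref{A} and the structure of the degree matrix $U(I_0)$; once that remark is available, the lemma follows formally from the cited propositions with no further work.
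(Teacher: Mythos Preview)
Your proposal is correct and follows essentially the same route as the paper: use that $\{f_0,\ldots,f_t\}$ is a DRL \GB\ of $I_t(X+A)$ (from the proof of Theorem~\ref{main}), apply Proposition~\ref{inhom}(2) together with $F_i=(f_i)^{hom}$ to get part~1, and then deduce part~2 from Proposition~\ref{homid}(3). The only difference is that you add some extra care (the remark about $I\neq R$ and the alternative argument via Proposition~\ref{inhom}(3)), which the paper omits as unnecessary.
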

\begin{proof}
In the proof of Theorem \ref{main} we have seen that the set $\{f_0,\ldots,f_t\}$ forms a degree reverse lexicographic \GB~ of $I_t(X+A)$. So, by Proposition \ref{inhom} we have that the set $\{F_0,\ldots,F_t\}$ forms a degree reverse lexicographic \GB~ of $(I_t(X+A))^{hom}$. Thus the first part follows.
The second part is an immediate consequence of the third point of Proposition \ref{homid}.
\end{proof}

\subsection{Parametrization}

Using the parametrization given by Theorem \ref{main}, we will now parametrize the following variety. Let $J \subset S$ be  a  Cohen-Macaulay homogeneous ideal with Krull dimension $\dim(S/J) = 1$ and such that $z$ is not a zero divisor for $S/J$. This implies  that $J$ defines   a zero-dimensional subscheme of $\mathbb{P}^2\setminus\{z=0\}$. In the following remark the fact that we use the DRL order with $z$ the smallest variable is curcial.
\begin{remark}
The fact that $z$ is not a zero divisor for $S/J$ is equivalent   to $\ini(J)$ being generated by monomials that are not divisible by $z$.
\end{remark}
\begin{proof}
If there would exist a minimal generator of $\ini(J)$ divisible by $z$, given the fact that we use the degree reverse lexicographic term order,  we would find a homogeneous generator of $J$ that would be a multiple of $z$. 
Now suppose $z$ is a zero divisor and choose $f \in S \setminus J$, such that $zf \in J$ and $\ini(f)$ is minimal with this property. As $z\ini(f) \in \ini(J)$, which is generated by 
monomials in $x$ and $y$, we obtain $\ini(f) \in \ini(J)$. So there exists a polynomial $g \in J$ with $\ini(f) = \ini(g)$. As $f-g \notin J$, $z(f-g) \in J$ and $\ini(f-g) < \ini(f)$ we obtain a contradiction.
\end{proof}
Denote $\ini(J) = J_0$. The ideal  $J_0$ will be  of the form:
\[J_0 = I_0S, \quad \textup{with } I_0 \subset R,~ \textup{a monomial ideal.}\]
We will consider the ideals for which $I_0$ is just as in the hypothesis of Theorem \ref{main}. So we have that $\dim_K(R/I_0) < \infty$ and we will also require $I_0$ to be a lex-segment ideal. For this type of ideals we will parametrize the following affine variety:
\[
\V{J_0} =  \{ J \subset S ~:~J ~\textup{is a homogeneous ideal with}~ \ini(J)=J_0 =  I_0S\}.
\]
We will prove that this variety is parametrized also by \AA{I_0}. Recall that \AA{I_0} was the set of matrices with entries polynomials in $y$, that satisfy (\ref{A}). We define the following application:
\begin{displaymath}
\begin{array}{c}
 \overline{\psi}: \AA{I_0} \longrightarrow \V{J_0} \\
 \rule{0pt}{3ex}\overline{\psi}(A) = I_t(X+A^{\underline{hom}}), \quad \textup{for all} ~A \in \AA{I_0}.
\end{array}
\end{displaymath}

\begin{theorem}\label{main2} Let $J_0 = I_0S \subset S$ be a monomial ideal, where $I_0$ is a lex-segment ideal of $R$ such that $\dim_K(R/I_0) < \infty$. Then the application $\overline{\psi}: \AA{I_0} \longrightarrow \V{J_0}$ defined above is a bijection.
\end{theorem}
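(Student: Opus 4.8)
The plan is to deduce Theorem \ref{main2} directly from Theorem \ref{main}, using the homogenization--dehomogenization dictionary recorded in Propositions \ref{homid} and \ref{inhom} and in Lemma \ref{lemahomid}. The crucial structural observation is that $\overline{\psi}$ is nothing but $\psi$ read through homogenization: by Lemma \ref{lemahomid},
\[ \overline{\psi}(A) = I_t(X+A^{\underline{hom}}) = \big(I_t(X+A)\big)^{hom} = \big(\psi(A)\big)^{hom} \quad\text{and}\quad \psi(A) = \big(\overline{\psi}(A)\big)^{deh} \]
for every $A \in \AA{I_0}$. Thus the two maps are intertwined by the operations $(-)^{hom}$ and $(-)^{deh}$, which (on the ideals occurring here) will be mutually inverse; the three assertions that $\overline{\psi}$ is well defined, injective, and surjective then follow by transporting the corresponding statements about $\psi$ along this dictionary.

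First I would verify that $\overline{\psi}$ is well defined, i.e. that $\overline{\psi}(A)\in\V{J_0}$. Homogeneity of $I_t(X+A^{\underline{hom}})$ is clear since $X+A^{\underline{hom}}$ is a homogeneous matrix, so only the initial ideal needs identification. By the proof of point 1 of Theorem \ref{main}, $\{f_0,\ldots,f_t\}$ is a DRL \GB~of $\psi(A)$ with $\ini(f_i)=x^{t-i}y^{m_i}$; by Proposition \ref{inhom} the homogenized set $\{F_0,\ldots,F_t\}$ is a DRL \GB~of $(\psi(A))^{hom}=\overline{\psi}(A)$ and $\ini(F_i)=\ini(f_i)$. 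Hence $\ini(\overline{\psi}(A))=\ini(\psi(A))S=I_0S=J_0$, so $\overline{\psi}(A)\in\V{J_0}$.

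Injectivity is then immediate: if $\overline{\psi}(A)=\overline{\psi}(B)$, dehomogenizing and applying Lemma \ref{lemahomid} gives $\psi(A)=(\overline{\psi}(A))^{deh}=(\overline{\psi}(B))^{deh}=\psi(B)$, whence $A=B$ by the injectivity half of Theorem \ref{main}. For surjectivity, take $J\in\V{J_0}$ and set $I:=J^{deh}\subseteq R$. By Proposition \ref{inhom} we have $\ini(I)=(\ini J)^{deh}=(I_0S)^{deh}=I_0$, so $I\in\I{I_0}$; by the surjectivity half of Theorem \ref{main} there is $A\in\AA{I_0}$ with $\psi(A)=I=J^{deh}$. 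Then $\overline{\psi}(A)=(\psi(A))^{hom}=(J^{deh})^{hom}$, so it remains to see that $(J^{deh})^{hom}=J$. By Proposition \ref{homid} we know $J\subseteq (J^{deh})^{hom}=J:_S(z)^{\infty}$, and since $J_0=I_0S$ is generated by monomials not divisible by $z$, the Remark preceding the definition of $\overline{\psi}$ shows that $z$ is a non-zero-divisor on $S/J$, hence $J:_S(z)^{\infty}=J$. Therefore $(J^{deh})^{hom}=J$ and $\overline{\psi}(A)=J$.

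Since every ingredient is already available, I do not expect a genuine obstacle. The point that deserves the most care is the well-definedness step: one must make sure that the degree-bounded homogenization $A\mapsto A^{\underline{hom}}$ produces the homogenization of the whole ideal $\psi(A)$ and not merely of its displayed generators. This is exactly the content of Lemma \ref{lemahomid}, which in turn relies on the $f_i$ forming a \GB~with respect to a degree-compatible term order, so that homogenization commutes with passing to the ideal; everything else is a mechanical transfer through Propositions \ref{homid} and \ref{inhom}.
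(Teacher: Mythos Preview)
Your proof is correct and follows essentially the same route as the paper: well-definedness via Proposition~\ref{inhom} applied to the \GB~$\{f_i\}$, injectivity by dehomogenizing through Lemma~\ref{lemahomid} and invoking the injectivity of $\psi$, and surjectivity by dehomogenizing $J$, applying Theorem~\ref{main}, and then using that $z$ is a non-zero-divisor on $S/J$ to get $(J^{deh})^{hom}=J$. The only cosmetic difference is that you frame the argument more explicitly as transporting Theorem~\ref{main} along the $(-)^{hom}/(-)^{deh}$ dictionary, but the content is identical.
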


\begin{proof}
In order to prove the theorem we need to prove again three things:
\begin{enumerate}
\item 
The application $\overline{\psi}$ is well defined.
\item
The application $\overline{\psi}$ is injective.
\item
The application $\overline{\psi}$ is surjective.
\end{enumerate}

\emph{Proof of 1.} For every $A$ in $\AA{I_0}$, denote the ideal $I_t(X+A^{\underline{hom}})$ by $J_A$. We need to show that $J_A$ is homogeneous and has $\ini(J_A) = J_0$. Using the notation of the previous section, we have by definition that the polynomials $F_0,\ldots,F_t$ are homogeneous. We just need to show that they form a \GB~ and that their initial terms generate $J_0$.

We know from Theorem \ref{main} that $f_0,\ldots,f_t$ form a \GB~ of $I_0$. 
As we have seen that for $i= 0,\ldots,t$ we have $(f_i)^{hom}=F_i$, by applying Proposition \ref{inhom} we get that $F_0,\ldots,F_t$ form a \GB ~and that
$\ini(f_i) = \ini(F_i)$ for all $ i = 0,\ldots,t$. 

\emph{Proof of 2.} Let $A$ and $B$ be two matrices in $\AA{I_0}$. Suppose that $\overline{\psi}(A) = \overline{\psi}(B)$.  That is we have 
\[I_t(X+A^{\underline{hom}})=I_t(X+B^{\underline{hom}}).\]
By  Lemma \ref{lemahomid} we obtain that we also have
\[I_t(X+A) = I_t(X+B).\]
And by the injectivity of $\psi$ we get that $A = B$.\\

\emph{Proof of 3.} Let $J \in \V{J_0}$ be a homogeneous ideal. By Proposition \ref{inhom} we have that $J^{deh} \subset R$ is an ideal that has $\ini(J^{deh}) = I_0$. So by Theorem \ref{main} we know that 
\[J^{deh} = I_t(X+A), \quad \textup{for some }~ A \in \AA{I_0}.\]
We will show that $J =I_t(X+A^{\underline{hom}})$. By Lemma \ref{lemahomid} we have that 
\[I_t(X+A^{\underline{hom}}) = (I_t(X+A))^{hom} = (J^{deh})^{hom}.\]
 To complete the proof we just need to show that $J=(J^{deh})^{hom}$. By Proposition \ref{homid} this means we have to show that $J = J :_{S} (z)^{\infty}$. But this is equivalent to $z$ not being a zero divisor for $S/J$.
\end{proof}

\section{Betti strata}

We will now fix a Hilbert series $H$ and consider  all ideals $J$ defining zero-dimensional subschemes   of  $\mathbb{P}^2$ such that the Hilbert series of $S/J$ is $H$. As before, by such an ideal  we understand a homogeneous ideal $J \subset S$ such that $S/J$ is Cohen-Macaulay of dimension 1. In this case we have that the maximal ideal $\mathfrak{m} = (x,y,z)$ of $S$ is not an associated prime of $S/J$, which is equivalent to $J$ being a saturated ideal. So, the Hilbert series $H$ will be of the form:
\[ H(s) = \frac{h(s)}{1-s},\]
with $h(s)$ the Hilbert series of the zero-dimensional algebra $S/J + (\ell)$, where $\ell$ is a linear non-zero divisor of $S/J$.

In this section we will assume that the field $K$ is algebraically closed and that its characteristic is either 0 or  $p> \max\{j~:~h_j\neq0\}$. We need this assumption in order to be able to say that the generic initial ideal is a strongly stable ideal. We recall here that  a monomial ideal $I$ of $\k{n}$ is strongly stable if for any monomial $M \in I$ and every $1\le j < i\le n$ such that $x_i$ appears in $M$, we have $(x_i/x_j)M \in I$.

 Denote by 
\[\mathbb{G}(H) = \{ J \subset S ~:~J~ \textup{defines a 0-dimensional scheme and}~ H_{S/J} = H \}\] 
 the variety that parametrizes graded saturated ideals of $S$ such that the Hilbert series of $S/J$ is $H$.

The first restriction that we will use will be to consider ideals for which $z$ is not a zero divisor. Let $J$ be an ideal defining a zero-dimensional subscheme of $\mathbb{P}^2$. This also means that 
\[ J = q_1 \cap \ldots \cap q_s, \] 
where for all $i$ we have$\sqrt{q_i} = p_i$ and $p_i$ is the ideal of a point $P_i$ in $\mathbb{P}^2$.
The geometric equivalent of $z$ not being a zero divisor for $S/J$ is that none of  the points $P_1,\ldots, P_s$  belongs to the line of $\mathbb{P}^2$ given by $z = 0$.  This means that the set:
\[ \mathbb{G}^*(H) := \{ J \in \mathbb{G}(H) ~:~ z ~\textup{is a non-zero divisor for}~ S/J\}\]
is an open subset of $\mathbb{G}(H)$.

Due to the choice of the term order, the fact that $z$ is not a zero divisor for $S/J$ implies that $\ini(J) = IS$, where $I$ is an ideal of $R$. We also have that $H_{R/I} (s) = h(s)$. The same thing also holds for the degree reverse lexicographic generic initial ideal of $J$. So we have that: 
\[ \Gin(J) = I_0S, \quad \textup{where}~ I_0 \subset R.\]
Due to the assumption on the characteristic of $K$,  the generic initial ideal is strongly stable, so we also get that $I_0$ must be strongly stable. But in  $R = K[x,y]$ the only strongly stable ideal with that Hilbert series is $\Lex(h)$.
This means that the set:
\[ \mathbb{G}^*_{\Lex} (H) = \{ J \in \mathbb{G}^*(H) ~:~ \ini(J) = \Lex(h)S\} \]
is an open subset of $\mathbb{G}^*(H)$. \\

Notice  that, if   $J_0=\Lex(h)S$, then for all $J\in\V{J_0}$, as $J_0$ is generated only by monomials in $x$ and $y$, the maximal irrelevant ideal of $S$ is not an associated prime, thus   $V_{hom}(J_0)=\mathbb{G}^*_{\Lex} (H)$.  We  study  the Betti strata of this affine set. 

For a homogeneous ideal $J\subset S$ we will denote by $\b_{i,j}(J)$ the $(i,j)$th Betti number. In particular, $\b_{0,j}(J)$ is the number of minimal generators of $J$ of degree $j$. It is known that any two of the  sets $\{\b_{0,j}(J)\}_j$, $\{\b_{1,j}(J)\}_j$ and $\{\dim(J_j)\}_j$ determine the third. For the fixed Hilbert series $H(s) =  h(s)/(1-s)$ and for given integers $j$ and $u$ we define:   
\[\begin{array}{rcc}
 V(H,j,u) &= &\{ J \in  \mathbb{G}^*_{\Lex} (H) ~:~ \b_{0,j}(J) = u\},\\
 \rule{0pt}{3ex} V(H,j,\ge u) &=& \{ J \in  \mathbb{G}^*_{\Lex} (H) ~:~ \b_{0,j}(J) \ge u\}.
 \end{array}\]
For a vector $\b = (\b_1,\ldots,\b_j,\ldots)$ with integral entries we define:

\[\begin{array}{rccl}
 V(H,\b) &= &{\bigcap}_{j} &V(H,j,\b_j), \\
 \rule{0pt}{3ex} V(H,\ge \b) &= &\bigcap_{j} &V(H,j,\ge\b_j).
 \end{array}\]

 For the fixed Hilbert function $H$, we consider the lex-segment ideal $\Lex(h)$ and denote by $m_0,\ldots, m_t$ its associated sequence of integers from Section 2. 
  We have shown that $\mathbb{G}^*_{\Lex} (H)$ is parametrized by $\AA{\Lex(h)}$, which is an affine space $\mathbb{A}^N$.
So we know that to each ideal $J \in \mathbb{G}^*_{\Lex} (H)$  corresponds a unique matrix $A \in \AA{\Lex(h)}$. Starting from this matrix $A$ we can construct a Hilbert-Burch matrix, that is $X+A^{\underline{hom}}$. For simplicity we will denote
$M:= X+A^{\underline{hom}}.$
We have by the Hilbert-Burch theorem the following free resolution:
\begin{equation}\label{rez}
 0 \longrightarrow \bigoplus_{i=1}^{t}S(-q_i) \stackrel{M}{\longrightarrow} \bigoplus_{i=1}^{t+1} S(-p_i) \longrightarrow J \longrightarrow 0
 \end{equation}
where $p_i = t+1 -i + m_i$ for $i = 1,\ldots, t+1$   and $q_i = p_i +1$ for $i = 1,\ldots,t$. For every integer $j$ we  define the sets of indices:
\[w_j = \{i~:~ p_i = j\} \quad \textup{and} \quad v_j=\{i~:~ q_i = j\}.\]
For every integer $j$ denote by $M_j$ the submatrix of $M$ with row indices $w_j$ and column indices $v_j$. 
As we are considering matrices that are in \AA{\Lex(h)} we know that $0=m_0 < m_1 < \ldots < m_t$. So we also get $t+1=p_0 \le p_1 \le \ldots \le p_t$. This means that we can describe the matrices $M_j$ in terms of the $m_i$'s. They are the blocks of constants below the diagonal (see  Figure \ref{partA}). This means that  the entries of these matrices will be independent coordinates of  $\mathbb{A}^N$.

To compute the graded Betti numbers of $J$ we can tensor the resolution (\ref{rez}) with $K$ and look at the degree $j$ component. This will give us  the following complex of vector spaces, whose homology gives  the Betti numbers of $I$:
\[ K^{\sharp v_j} \stackrel{M_j}{\longrightarrow} K^{\sharp w_j} \longrightarrow 0,\]
where by $\sharp v_j$ (resp. $\sharp w_j$) we denote the cardinality of the set $v_j$ (resp. $w_j$).
Hence we have that:
\[\b_{0,j}(J) = \sharp w_j - \rank(M_j).\]
This means that $\b_{0,j}(J) \ge u$ is equivalent to
\[ \rank(M_j) \le \sharp w_j - u.\] 
Notice that, as we start from a matrix $A \in \AA{\Lex(h)}$, we have $\sharp w_j = \b_{0,j}(\Lex(h))$. That is the number of minimal generators of degree $j$ of $\Lex(h)$. We also have $\sharp v_j = \b_{1,j}(\Lex(h)) = \b_{0,j-1
}(\Lex(h))$.

So we obtain that $V(H,j,\ge u)$ is the determinantal variety given by the following  condition on the $\b_{0,j}(\Lex(h)) \times \b_{0,j-1}(\Lex(h))$ matrix:
\[ \rank(M_j) \le  \b_{0,j}(\Lex(h)) - u.\]

It is easy to notice, that for $i \neq j$ the sets of variables involved in $M_i$ and $M_j$ are disjoint. This means that the intersection $\bigcap_{j} V(H,j,\ge\b_j)$ is transversal.
We have proven the following:

\begin{proposition} Let $K$ be a field of characteristic  $p> \max\{j~:~h_j\neq0\}$ or of characteristic 0.
Each $V(H,j,\ge \b_j)$ is a determinantal variety and the variety $V(H,\ge\b)$  is the transversal intersection of the  $V(H,j,\ge\b_j)$'s.  The variety $V(H,j,\ge\b_j)$ is irreducible and it coincides with the closure of $V(H,j,\b_j)$, provided $V(H,j,\b_j)$ is not empty. 
\end{proposition}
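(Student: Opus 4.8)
The plan is to treat the four assertions in turn: the first two are immediate from the discussion preceding the statement, and the last two will follow from the classical theory of generic determinantal varieties. Fix an integer $j$ and abbreviate $m := \sharp w_j = \b_{0,j}(\Lex(h))$, $n := \sharp v_j = \b_{0,j-1}(\Lex(h))$ and $r := m - \b_j$. From the identity $\b_{0,j}(J) = \sharp w_j - \rank(M_j)$ recorded above, the condition cutting out $V(H,j,\ge\b_j)$ is $\rank(M_j) \le r$, i.e. the simultaneous vanishing of all $(r+1)$-minors of $M_j$; since the entries of $M_j$ are among the free coordinates of $\mathbb{A}^N = \AA{\Lex(h)}$ (they are exactly the blocks of constants below the diagonal singled out above), $V(H,j,\ge\b_j)$ is a determinantal variety. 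For $i\ne j$ the coordinate sets occurring in $M_i$ and $M_j$ are disjoint, so $V(H,\ge\b)$ is, up to a factor of the affine space spanned by the remaining coordinates, the product of the $V(H,j,\ge\b_j)$'s, and the intersection is therefore transversal. This settles the first two claims.

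For irreducibility I would invoke the classical fact that, over any field, the generic determinantal variety $D_r^{m\times n}\subset\mathbb{A}^{mn}$ of $m\times n$ matrices of rank at most $r$ is irreducible (of codimension $(m-r)(n-r)$ when $r\le\min(m,n)$, and all of $\mathbb{A}^{mn}$ otherwise). Because the coordinates of $\mathbb{A}^N$ not occurring in $M_j$ are unconstrained, there is an isomorphism $V(H,j,\ge\b_j)\cong D_r^{m\times n}\times\mathbb{A}^{N-mn}$; a product of an irreducible variety with an affine space is irreducible, so $V(H,j,\ge\b_j)$ is irreducible.

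Finally, assume $V(H,j,\b_j)\ne\emptyset$. Again from $\b_{0,j}(J)=\sharp w_j-\rank(M_j)$, the set $V(H,j,\b_j)$ is precisely the locus $\rank(M_j)=r$, that is, the complement in $V(H,j,\ge\b_j)$ of the closed determinantal subvariety $V(H,j,\ge\b_j+1)=\{\rank(M_j)\le r-1\}$. Nonemptiness of $V(H,j,\b_j)$ forces $r\le\min(m,n)$, so $D_{r-1}^{m\times n}$ is a proper closed subvariety of the irreducible variety $D_r^{m\times n}$; hence $V(H,j,\b_j)$ is a nonempty Zariski-open, and thus dense, subset of $V(H,j,\ge\b_j)$, and therefore $\overline{V(H,j,\b_j)}=V(H,j,\ge\b_j)$.

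The whole argument is just a translation of the combinatorial set-up of this section into the language of generic determinantal varieties; once one is inside $\mathbb{G}^*_{\Lex}(H)$, where the restriction on the characteristic is used, the determinantal arguments themselves are characteristic-free, so I expect no serious obstacle. The single delicate point is the claim used throughout that the entries of the matrices $M_j$ form an independent system of coordinates on $\mathbb{A}^N$, the $M_i$'s using pairwise disjoint coordinates: this is exactly where the lex-segment hypothesis enters, since it forces the blocks of constants below the diagonal in a matrix of $\AA{\Lex(h)}$ to be genuine rectangular blocks of free parameters, and it is what makes the reduction to $D_r^{m\times n}\times\mathbb{A}^{N-mn}$ legitimate.
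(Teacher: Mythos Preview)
Your proposal is correct and follows essentially the same approach as the paper: the paper's argument is precisely the discussion preceding the proposition, which identifies $V(H,j,\ge\b_j)$ with the rank locus $\{\rank(M_j)\le \b_{0,j}(\Lex(h))-\b_j\}$ in a generic matrix with independent entries, and observes that distinct $j$ involve disjoint coordinates. Your write-up is in fact slightly more explicit than the paper on the last two claims, spelling out the product decomposition $V(H,j,\ge\b_j)\cong D_r^{m\times n}\times\mathbb{A}^{N-mn}$ and the open-dense argument for the closure statement, which the paper leaves to the reader as standard facts about generic determinantal varieties.
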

 As a  corollary we have:
 \begin{corollary}Let $K$ be a field of characteristic  $p> \max\{j~:~h_j\neq0\}$ or of characteristic 0.
\begin{itemize}
\item[1.]The variety  $V(H,\ge\b)$ is irreducible.
\item[2.] The codimension of $V(H,\ge\b)$ in $\GG(H)$ is the sum of the codimensions of the $V(H,j,\ge\b_j)$'s in $\GG(H)$.
\end{itemize}
 \end{corollary}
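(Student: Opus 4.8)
The plan is to read the corollary off the structural statement of the preceding Proposition: $V(H,\ge\b)=\bigcap_j V(H,j,\ge\b_j)$ is a transversal intersection of irreducible generic determinantal varieties, where ``transversal'' is concretely the fact (already exploited in the Proposition) that for $i\neq j$ the constant blocks $M_i$ and $M_j$ involve pairwise disjoint subsets of the coordinates of $\AA{\Lex(h)}=\mathbb{A}^N\cong\V{J_0}=\GG^*_{\Lex}(H)$.

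First I would fix the product decomposition $\mathbb{A}^N=\big(\prod_{j:\,\b_j\ge1}\mathbb{A}^{N_j}\big)\times\mathbb{A}^{N_0}$, where $\mathbb{A}^{N_j}$ is the coordinate subspace spanned by the entries of $M_j$ and $\mathbb{A}^{N_0}$ collects the remaining coordinates. Using the rank description $\b_{0,j}(J)=\sharp w_j-\rank(M_j)$ established before the Proposition, one gets under this decomposition
\[ V(H,j,\ge\b_j)=D_j\times\!\!\prod_{i\neq j}\!\mathbb{A}^{N_i}\times\mathbb{A}^{N_0},\qquad V(H,\ge\b)=\Big(\prod_{j}D_j\Big)\times\mathbb{A}^{N_0}, \]
where $D_j\subseteq\mathbb{A}^{N_j}$ is the generic determinantal locus $\{\,\rank M_j\le r_j\,\}$ with $r_j=\b_{0,j}(\Lex(h))-\b_j$. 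It is classical that such a locus is irreducible, and, when nonempty, of codimension $(\sharp w_j-r_j)(\sharp v_j-r_j)$ in $\mathbb{A}^{N_j}$. Hence $V(H,j,\ge\b_j)$ and $V(H,\ge\b)$, being products of irreducible varieties with affine spaces over the algebraically closed field $K$, are irreducible; this proves part (1).

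For part (2) I would invoke additivity of codimension under products of irreducible affine varieties: from the two displays above,
\[ \codim_{\mathbb{A}^N}V(H,\ge\b)=\sum_{j}\codim_{\mathbb{A}^{N_j}}D_j=\sum_{j}\codim_{\mathbb{A}^N}V(H,j,\ge\b_j), \]
which is exactly the transversality of the intersection. (If $V(H,\ge\b)=\emptyset$ there is nothing to prove; otherwise each $V(H,j,\ge\b_j)\supseteq V(H,\ge\b)$, hence each $D_j$, is nonempty, so every term is finite.) Finally, since $\V{J_0}=\GG^*_{\Lex}(H)$ is a nonempty open — hence dense — subset of the irreducible variety $\GG(H)$ (the Hilbert-function stratum of $\Hilb^H(\mathbb{P}^2)$), passing to Zariski closures in $\GG(H)$ changes none of these codimensions, so the displayed identity holds with all codimensions computed in $\GG(H)$.

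The argument is entirely formal once the Proposition is available; the only inputs from outside are the irreducibility and the codimension of generic determinantal varieties together with the additivity of codimension under products. The one step deserving a little care is the transfer of the codimension statement from $\GG^*_{\Lex}(H)$ to $\GG(H)$, handled by the density just mentioned; this is also where the standing assumption that $K$ is algebraically closed of characteristic $0$ or $p>\max\{j:h_j\neq0\}$ enters, since it is what guarantees $\Gin(J)=\Lex(h)S$ and therefore $\V{J_0}=\GG^*_{\Lex}(H)$.
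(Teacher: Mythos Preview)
Your argument is correct and is precisely the unpacking the paper has in mind: the paper states the corollary with no proof, treating it as immediate from the preceding Proposition, and your product decomposition $\mathbb{A}^N=\big(\prod_j\mathbb{A}^{N_j}\big)\times\mathbb{A}^{N_0}$ is exactly what ``transversal intersection'' means there. The only point worth a word of caution is the last step: you write that $\GG^*_{\Lex}(H)$ is ``open --- hence dense'' in the irreducible $\GG(H)$, but openness alone does not give density without first knowing $\GG(H)$ is irreducible; the paper asserts this density in the introduction (via the generic initial ideal argument under the characteristic hypothesis), so you may simply cite that rather than deduce it from irreducibility.
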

The matrix $M_j$ is a matrix  of size $\b_{0,j}(\Lex(h)) \times \b_{1,j}(\Lex(h))$ whose entries are distinct variables. So,  whenever the variety $V(H,j,\ge u)$ is not empty, that is, whenever we have $\b_{0,j}(\Lex(h)) - \b_{1,j}(\Lex(h)) \le u \le \b_{0,j}(\Lex(h))$, its codimension is
\[ (\b_{0,j}(\Lex(h))-\b_{1,j}(\Lex(h))+ u)u.\]

If $J$ is a homogeneous ideal of the polynomial ring $S$ with Hilbert series $H$ and with $\b_{0,j}(J)=u$, then $\b_{0,j}(\Lex(h))-\b_{1,j}(\Lex(h))+ u = \b_{1,j}(J)$. This means that the formula for the codimension of $V(H,j,\ge u)$ can be written as 
 $\b_{1,j}(J)\b_{0,j}(J)$. We have thus obtained, by different methods  than the one indicated by the author in \cite[Remark 3.7]{iarro}, the generalization of the codimension formula  regarding the Betti strata for codimension two punctual schemes in $\mathbb{P}^2$, namely:

\begin{corollary}Let $K$ be a field of characteristic  $p> \max\{j~:~h_j\neq0\}$ or of characteristic 0.
Let $J \in \mathbb{G}(H)$ and set $\b = (\b_{0,j}(J))$. The variety $V(H,\ge\b)$ is irreducible, it is the closure of $V(H,\b)$ and it has codimension in $\GG(H)$
\[\sum_{j}\b_{1,j}(J)\b_{0,j}(J).\]
\end{corollary}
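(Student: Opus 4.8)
The plan is to deduce the statement from the Proposition and the Corollary that directly precede it, after a short reduction moving $J$ into the open stratum $\mathbb{G}^*_{\Lex}(H) = \V{\Lex(h)S} = \mathbb{A}^N$, where the determinantal descriptions and the codimension counts have already been carried out.

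First I would record that $V(H,\b)$ is non-empty. Given $J\in\GG(H)$, a generic element $g\cdot J$ of its $\mathrm{GL}_3$-orbit satisfies $\ini(g\cdot J)=\Gin(J)$; under the hypothesis on the characteristic $\Gin(J)$ is strongly stable, and since a generic linear form is a non-zero divisor on $S/(g\cdot J)$ the ideal $\Gin(J)$ is generated, by the Remark preceding the Proposition, by monomials in $x,y$ only, hence — being strongly stable with Hilbert function $h$ — equals $\Lex(h)S$. Thus $g\cdot J\in\mathbb{G}^*_{\Lex}(H)$, and since the graded Betti numbers are invariant under a linear change of coordinates we get $\b_{0,j}(g\cdot J)=\b_{0,j}(J)=\b_j$ for all $j$, so $g\cdot J\in V(H,\b)$. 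In particular every $V(H,j,\b_j)$ is non-empty and the Proposition applies verbatim.

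Next, the topological claims. By the Proposition each $V(H,j,\ge\b_j)$ is irreducible and equals $\overline{V(H,j,\b_j)}$, and by part 1 of the Corollary $V(H,\ge\b)$ is irreducible. The locus $\{\rank(M_j)=\sharp w_j-\b_j\}$ is open inside $V(H,j,\ge\b_j)$, and the matrices $M_j$ involve pairwise disjoint sets of coordinates of $\mathbb{A}^N$; hence $V(H,\b)=\bigcap_j V(H,j,\b_j)$ is the intersection of $V(H,\ge\b)$ with an open subset of $\mathbb{A}^N$. Being non-empty and open in the irreducible $V(H,\ge\b)$, it is dense, so $V(H,\ge\b)=\overline{V(H,\b)}$.

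For the codimension, part 2 of the Corollary gives $\codim_{\GG(H)}V(H,\ge\b)=\sum_j\codim_{\GG(H)}V(H,j,\ge\b_j)$, the codimensions in $\GG(H)$ coinciding with those taken in the open dense subset $\mathbb{G}^*_{\Lex}(H)=\mathbb{A}^N$. Each $V(H,j,\ge\b_j)$ is non-empty (it contains $g\cdot J$), so the determinantal codimension formula recalled just before this corollary yields $\codim V(H,j,\ge\b_j)=(\b_{0,j}(\Lex(h))-\b_{1,j}(\Lex(h))+\b_j)\b_j$; substituting the identity $\b_{0,j}(\Lex(h))-\b_{1,j}(\Lex(h))+\b_{0,j}(J)=\b_{1,j}(J)$ turns each summand into $\b_{1,j}(J)\b_{0,j}(J)$, and the sum over $j$ is the claimed codimension. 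The only step requiring genuine care is this passage between $\GG(H)$ and its open dense subset $\mathbb{G}^*_{\Lex}(H)$ — namely the non-emptiness of $V(H,\b)$ and the equality of codimensions — which rests on the density of $\V{\Lex(h)S}$ in $\GG(H)$ (valid under the stated restriction on the characteristic) together with the invariance of graded Betti numbers under a generic coordinate change; beyond that the corollary is a formal consequence of the two preceding results.
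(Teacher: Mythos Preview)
Your proof is correct and follows the same approach as the paper: reduce to the preceding Proposition and Corollary, then apply the determinantal codimension formula together with the identity $\b_{0,j}(\Lex(h))-\b_{1,j}(\Lex(h))+\b_{0,j}(J)=\b_{1,j}(J)$. You make explicit one step the paper leaves implicit, namely the non-emptiness of $V(H,\b)$ by moving $J$ into $\mathbb{G}^*_{\Lex}(H)$ via a generic coordinate change, which is exactly the right justification.
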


\end{document}